\newtheorem{theorem}{Theorem}
\newtheorem{lemma}{Lemma}
\newtheorem{fact}{Fact}
\def\beq{\begin{equation}}
\def\eeq{\end{equation}}
\def\mn{\medskip\noindent}
\def\ep{\epsilon}
\def\square{\vcenter{\vbox{\hrule height .4pt
  \hbox{\vrule width .4pt height 5pt \kern 5pt
        \vrule width .4pt} \hrule height .4pt}}}
\def\var{\hbox{var}\,}
\def\fns{\footnotesize}
\definecolor{darkred}{rgb}{0.7,0,0}
\begin{document}

\title{SIR epidemics on evolving graphs}
\author{Yufeng Jiang, Remy Kassem, Grayson York, \\
Matthew Junge, and Rick Durrett \\
\small Dept.\ of Math, Duke University, Box 90320, Durham NC 27708-0320}

\date{\today}						

\maketitle

\begin{abstract}
We consider evoSIR, a variant of the SIR model, on Erd\H os-Renyi random graphs in which susceptibles with an infected neighbor break that connection at rate $\rho$ and rewire to a randomly chosen individual. We compute the critical infection rate $\lambda_c$ and the probability of a large epidemic by showing that they are the same for the delSIR model in which $S-I$ connections are deleted instead of rewired. The final size of a large delSIR epidemic has a continuous transition. Simulations suggest that the final size of a large evoSIR epidemic is discontinuous at $\lambda_c$.
\end{abstract}    

\section{Introduction}

In the SIR model, individuals are in one of three states: $S=$ susceptible, $I=$ infected, $R=$ removed (cannot be infected). Often this epidemic takes place in a homogeneously mixing population. However, here, we have a graph that gives the social structure of the population; vertices represent individuals and edges a connection. $S-I$ edges become $I-I$ at rate $\lambda$, i.e., after a time $T$ with an exponential($\lambda$) distribution: $P(T > t ) = e^{-\lambda t}$. The  two versions of the model we consider differ in the length of time individuals remain infected. In the first, infections always last for time 1. In the second, infection times are exponential(1) distributed. Once individuals leave the infected state, they enter the removed state. Our main interest here is in the variant of the model in which $S-I$ edges are broken at rate $\rho$ and the susceptible individual connects to an individual chosen at random from the graph. We call this process evoSIR. To prove results for evoSIR it is useful to also study the variant delSIR in which edges are deleted at rate $\rho$.

Since the turn of the century, the complex networks community has studied systems in which the structure of 
a social network and the states of individuals coevolve. For a survey see \cite{GB}. Since the pioneering
2006 work of Holme and Newman \cite{HN}, much attention has focused on the evolving voter model. In that system, each individual
has one of two opinions, say 0 and 1. On each step of the simulation, one $0-1$ edge is picked at random
and given an orientation $(x,y)$. With probability $1-\alpha$, $x$ adopts the opinion of $y$, while with
probability $\alpha$, $x$ breaks the connection with $y$ and chooses an individual (i) at random from the graph (rewire-to-random)
or (ii) randomly from those with the same opinion (rewire-to-same). When $\alpha$ is close to 1, then the graph quickly breaks
into a large number of components of individuals with the same opinion. When $\alpha$ is small, 
a large component of like-minded individuals forms. An account of the properties of this model and references to 
earlier work can be found in Durrett et al.\ \cite{evov}. Recently Basu and Sly \cite{BaSly}  rigorously proved the existence
of a phase transition from rapid disconnection to prolonged persistence for dense graphs. However they have not been able to explain
the dramatic difference in the qualitative properties of the system under the two rewiring schemes. 

In 2006, Gross et al.\ \cite{GDB} introduced the evolving SIS model, which is similar to the model with exponential rates 
that we study here, except that $I$ individuals return to state $S$ at rate 1.  Using the pair approximation, they
were able to show that, if the rewiring rate is fixed and the infection rate is increased, then the model undergoes a
discontinuous phase transition in which a critical epidemic infects a positive fraction of the individuals. There are
a number of papers in the physics literature that study this model \cite{Zanette,ZanGus,JRS,SSP,SBM}, but there is no
proof of the result in \cite{GB}. 

When we began writing this paper, the only work on evolving evoSIR that we knew about were two papers of Volz and Meyers
\cite{VM1,VM2}. They considered an evolving SIR model on a random regular graph. However, they used a strange update rule, called ``neighbor exchange,''
in which two edges are chosen and their connections swapped. This preserves the constant degree of vertices, but does not
seem very realistic.

As we were finishing up the writing, we learned of three recent papers by Britton and collaborators that study the delSIR and evoSIR models with  exponential infection times. \cite{LBSB} studies the delSIR model on two versions of the configuration model. In the Molloy and Reed (MR) model \cite{MR} degrees $d_i$  are a deterministic sequence with specified asymptotic properties. In the Newman Strogatz Watts (NWS) models, degrees $d_i$ are i.i.d. In both situations to make the graph, $d_i$ half-edges are attached to $i$ and when $d_1+ \cdots + d_n$ is even the half-edges are paired at random. We will discuss results from this and the next two papers at appropriate times in the text.

\cite{BJS} studies a one parameter family of models (SIR-$\omega$) that interpolates between delSIR and evoSIR. In this model, $S-I$ connections are broken at rate $\omega$. The connections are rewired with probability $\alpha$, and remain broken with probability $1-\alpha$. They study the initial phase of the epidemic using branching process and pair approximation methods. Finally, \cite{LBSB} extends the results in \cite{BJS} and explores the implications of their results for epidemics. The main take home message is that measures taken by individuals to protect themselves (rewiring) can be detrimental to the population as a whole. That is, the final size of an SIR-$\omega$ epidemic can exceed the final size of the original SIR model. Their results are primarily based on simulation. They consider the NSW configuration model, a clique network, and two real networks: the collaboration network reconstructed from arXiv postings in general relativity, and in the Facebook ``social circles'' network.

Throughout this article our focus will be on networks modeled by Erd\H os-Renyi graphs with mean degree $\mu$. These are random graphs on $n$ vertices in which every pair of vertices is independently connected by an edge with probability $\mu /n$. We denote a graph sampled in this manner by $G(n,\mu/n)$. It is well known (see \cite{RGD}) that if $\mu > 1$, then $G(n,\mu/n)$ with high probability has a single giant component with $\Omega(n)$ vertices. 

When we say a sequence of events $(A_n)$ occurs with high probability, we mean that $P(A_n) \to 1$. This will often be abbreviated as \emph{whp}. A sequence of events $(A_n)$ occurs with positive probability if $\inf_n P(A_n)>0$. We often will abbreviate this as \emph{wpp}. Initially one randomly chosen vertex is infected and the rest susceptible. When we say that a large epidemic occurs, we mean that the number of removed individuals after there are no more infected individuals is $\Omega(n)$ wpp. The \emph{critical value} for an SIR process is the value $\lambda_c$ such that if $\lambda<\lambda_c$, then the probability of a large epidemic goes to 0 as the size of the graph $n \to\infty$. If $\lambda >\lambda_c$, a large epidemic occurs wpp.

\subsection{SIR with fixed infection times}

In this section we consider the usual SIR dynamics in which each infection lasts for exactly time 1. This case is simple because each edge will be $S-I$ (or $I-S$) only once. When that happens the infection will be transferred 
to the other end with probability 
\beq
\tau^f = P(T \le 1 ) = 1-e^{-\lambda}
\label{tauf}
\eeq
and the transfers for different edges are independent. Here the `$f$' in the superscript is for ``fixed time." Due to the last observation, we can delete edges with probability $e^{-\lambda}$ and the connected components of the resulting graph will give the epidemic sizes when one member of the cluster is infected.  We can have a large epidemic if and only if the reduced graph has a giant component. In the physics literature this idea is attributed to Grassberger (1983), in math to Barbour and Mollison (1990), and in complex networks to Newman (2002).

Throughout this paper we will make use of Poisson thinning: if the number of objects $N$ is Poisson with mean $\lambda$, and if we flip a coin with probability $p$ of heads to see if we keep each object, then the number of objects kept is Poisson with mean $\lambda\mu$. Using this, it is easy to show:

\begin{fact}
 If the original graph is Erd\H os-Renyi with mean degree $\mu$, then the reduced graph is Erd\H os-Renyi with mean degree $\mu\tau^f$. So, a large epidemic occurs with positive probability if $\mu\tau^f > 1$. If $z_0$ is the fixed point smaller than $1$ of the generating function 
\beq
G(z) = \exp(-\mu\tau^f (1-z)), 
\label{ftgf}
\eeq
then $1-z_0$ gives both the limiting probability an infected individual will start a large epidemic, and the fraction of individuals who will become infected when a large epidemic occurs.
\end{fact}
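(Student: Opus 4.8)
The plan is to split the claim into three parts: identifying the reduced graph as Erd\H os-Renyi, locating the threshold at $\mu\tau^f = 1$, and extracting $1-z_0$ from the associated generating function. For the first, I would argue directly from independence. In $G(n,\mu/n)$ each of the $\binom{n}{2}$ potential edges is present independently with probability $\mu/n$, and by the observation preceding the statement a present edge transmits, and is therefore retained in the reduced graph, with probability $\tau^f = 1 - e^{-\lambda}$, independently across edges. Composing the two independent Bernoulli trials, the pair $\{i,j\}$ is an edge of the reduced graph with probability $(\mu/n)\,\tau^f = \mu\tau^f/n$, independently over pairs, which is exactly the law of $G(n,\mu\tau^f/n)$. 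Equivalently, Poisson thinning turns the (asymptotically Poisson($\mu$)) degree of a vertex into a Poisson($\mu\tau^f$) degree.

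Because each infection lasts a full unit of time, every retained edge eventually carries the infection, so the set of individuals ever infected starting from a single vertex is exactly that vertex's connected component in the reduced graph. When $\mu\tau^f > 1$, the giant-component theorem for Erd\H os-Renyi graphs quoted above (see \cite{RGD}) produces a unique component of size $\Omega(n)$ whp; a uniformly chosen initial vertex falls inside it with probability bounded away from $0$, so a large epidemic occurs wpp. This establishes the threshold claim.

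For the final sentence I would use the local structure of $G(n,\mu\tau^f/n)$: as $n\to\infty$ the neighborhood of a fixed vertex converges to a Galton-Watson tree with offspring distribution Poisson($\mu\tau^f$), whose generating function is $\phi(z) = \exp(\mu\tau^f(z-1)) = \exp(-\mu\tau^f(1-z)) = G(z)$. The extinction probability of this tree is the smallest fixed point $z_0 \in [0,1)$ of $G$, so survival has probability $1-z_0$. Survival of the exploration is equivalent to landing in the giant component, which gives $1-z_0$ as the limiting probability that an infected vertex starts a large epidemic. The same constant is the limiting fraction of vertices in the giant component (the law of large numbers for its size in \cite{RGD}), and since a large epidemic infects exactly that component, $1-z_0$ is also the limiting infected fraction.

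The main obstacle, and the part I would import wholesale from \cite{RGD} rather than reprove, is the rigorous coupling underlying this heuristic: showing that the component-exploration process is faithfully matched to the Galton-Watson tree up to sizes $o(n)$, and that branching-process survival corresponds exactly to membership in the giant component. Everything preceding that step is bookkeeping with independent coin flips, which is presumably why the authors call the result ``easy to show.''
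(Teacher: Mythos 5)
Your proposal is correct and follows essentially the same route as the paper: the percolation reduction (each edge independently survives with probability $\tau^f$, so by independence/Poisson thinning the reduced graph is $G(n,\mu\tau^f/n)$), identification of the epidemic cluster with the connected component of the initial vertex, and the branching-process survival probability $1-z_0$ for the chance of a large epidemic, with the rigorous couplings deferred to \cite{RGD}. The only difference is cosmetic: for the final-size claim you quote the law of large numbers for the giant component of $G(n,\mu\tau^f/n)$, whereas the paper sketches Martin-L\"of's exploration-process/martingale derivation of that same statement (Fact \ref{MLft}, detailed in Section~\ref{sec:MLfixed}), which it develops because it is reused for the rewiring arguments later.
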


\noindent
Here and in what follows, things that we call facts are known results while those we call theorems are new.
The formula for the probability of a large epidemic comes from the fact that, in its early stages, the growth of the epidemic is well approximated by a branching process. See Section 2.2 in \cite{RGD}. The probability of a large epidemic is the probability the branching process does not die out, which is also $1-z_0$.

For the developments below, it is useful to sketch a more sophisticated approach due to Martin-L\"of \cite{ML}, who used this idea to prove a central limit theorem for the number of individuals infected when there is a large epidemic. Suppose that we have deleted the edges that the infection will not cross to produce  $G(n, \bar\mu/n)$ where $\bar\mu = \mu\tau^f$. The calculation is based on an algorithm that computes the component containing an arbitrary starting vertex which we label `1'. It begins with the removed set $R_0=\emptyset$, the active (or infected) set $A_0=\{1\}$, and the unexplored (or susceptible) set $U_0 =\{2, \ldots n \}$. Let $\eta_{j,i} = 1$ if there is an edge from $i$ to $j$ and 0 otherwise. At time $t$, if $A_t \neq\emptyset$ we pick an $i_t \in A_t$ and update the sets as follows: 
\begin{align*}
{\cal R}_{t+1} & =  {\cal R}_t \cup \{ i_t \} \cr
{\cal A}_{t+1} & =  {\cal A}_t - \{ i_t \} \cup \{ y \in {\cal U}_t : \eta_{i_t,y} = 1 \} \\
{\cal U}_{t+1} & =  {\cal U}_t - \{ y \in {\cal U}_t : \eta_{i_t,y} = 1 \}.
\end{align*}
When ${\cal A}_t=\emptyset$ we have found all of the members of the cluster containing 1 and the algorithm halts. If ${\cal A}_t=\emptyset$ and we have not yet found the giant component we select $i_t \in {\cal U}_t$ and continue.

Let $R_t = |{\cal R}_t|$, $A_t = |{\cal A}_t|$ and $U_t = |{\cal U}_t|$. Let ${\cal F}_t$ be the $\sigma$-field generated by the process up to time $t$. The number of connections from $i_t$ to $U_t$ is binomial($U_t,\bar\mu/n)$ so
$$
E( \Delta U_t | {\cal F}_t )  =  - U_t \frac{\bar\mu}{n} \quad \hbox{and}\quad
\var( \Delta U_t | {\cal F}_t )  =  U_t \frac{\bar\mu}{n} \left( 1- \frac{\bar\mu}{n} \right).
$$
Using the fact that $X_t = (1-\bar\mu/n)^{-t} U_t$ is a martingale and computing second moments one can easily prove (see Section 4.1):

\begin{fact}  \label{MLft}
As $n\to\infty$, $U_{[ns]}/n$ converges to $u_s = \exp(-\bar\mu s)$ uniformly on $[0,1]$.
\end{fact}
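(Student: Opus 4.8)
The plan is to read the mean off the martingale, control fluctuations with a second-moment computation to get convergence in probability at each fixed $s$, and then upgrade to uniform convergence by exploiting that $t \mapsto U_t$ is non-increasing.

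First, since $X_t = (1-\bar\mu/n)^{-t}U_t$ is a martingale with $X_0 = U_0 = n-1$, taking expectations gives $E(U_t) = (1-\bar\mu/n)^t(n-1)$ exactly. Setting $t = [ns]$ and using $(1-\bar\mu/n)^{ns} \to e^{-\bar\mu s}$, we obtain $E(U_{[ns]})/n \to e^{-\bar\mu s} = u_s$ for each $s \in [0,1]$.

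Next I would bound the variance. The martingale increments $\Delta X_t = (1-\bar\mu/n)^{-(t+1)}\bigl(U_{t+1} - E(U_{t+1}\mid\mathcal{F}_t)\bigr)$ are orthogonal, so $\var(X_t) = \sum_{s<t} E\bigl[\var(\Delta X_s \mid \mathcal{F}_s)\bigr]$. Plugging in $\var(\Delta U_s \mid \mathcal{F}_s) = U_s\frac{\bar\mu}{n}(1-\bar\mu/n)$ together with $E(U_s) = (1-\bar\mu/n)^s(n-1)$, each summand is of order $\bar\mu\,(1-\bar\mu/n)^{-s}$, and the resulting geometric sum is of order $n(1-\bar\mu/n)^{-t}$. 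Undoing the time change multiplies the variance by $(1-\bar\mu/n)^{2t}$, giving $\var(U_{[ns]}) = O\bigl(n\,e^{-\bar\mu s}\bigr)$, hence $\var(U_{[ns]}/n) = O(1/n) \to 0$. Chebyshev's inequality then yields $U_{[ns]}/n \to u_s$ in probability for each fixed $s$.

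Finally, to promote this to uniform convergence, I would use that the algorithm only ever removes vertices from $\mathcal{U}_t$, so $t \mapsto U_t$ is non-increasing and thus $s \mapsto U_{[ns]}/n$ is non-increasing, while the limit $u_s = e^{-\bar\mu s}$ is continuous and strictly decreasing. Fix a grid $0 = s_0 < \cdots < s_m = 1$ fine enough that $u_{s_j} - u_{s_{j+1}} < \epsilon$ for every $j$, which is possible by uniform continuity of $u$. For $s \in [s_j, s_{j+1}]$, monotonicity sandwiches $U_{[ns]}/n$ between $U_{[ns_{j+1}]}/n$ and $U_{[ns_j]}/n$, and likewise $u_s$ between $u_{s_{j+1}}$ and $u_{s_j}$. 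Since the finitely many grid endpoints each converge in probability, with high probability all of them lie within $\epsilon$ of their limits simultaneously, and the sandwich forces $\sup_{s\in[0,1]}\lvert U_{[ns]}/n - u_s\rvert \le 2\epsilon$. The main obstacle is the second-moment step: one must track the geometric sum carefully and verify that the $(1-\bar\mu/n)^{-t}$ growth of $\var(X_t)$ is exactly cancelled by the $(1-\bar\mu/n)^{2t}$ factor upon translating back to $U_t$, so that $\var(U_{[ns]}/n)$ is genuinely $O(1/n)$ rather than something larger; the restart steps that occur when $\mathcal{A}_t = \emptyset$ contribute only $O(1)$ corrections and do not affect these estimates.
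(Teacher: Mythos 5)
Your proof is correct, and its core coincides with the paper's: both rest on the martingale $X_t=(1-\bar\mu/n)^{-t}U_t$ together with a second-moment bound, and your orthogonality-of-increments computation of $\var(X_t)$ is in substance the same calculation the paper performs when it bounds $E(Y_{t+1}^2-Y_t^2\mid{\cal F}_t)=E((Y_{t+1}-Y_t)^2\mid{\cal F}_t)\le C/n^2$ for $Y_t=X_t/n-1$ and sums over $t$. Where you genuinely diverge is the passage to uniformity. The paper applies Doob's $L^2$ maximal inequality to the martingale $Y_t$ and then Chebyshev, controlling $\max_{0\le t\le n}|X_t/n-1|$ in one stroke; that route needs nothing about $U_t$ beyond the martingale property, produces an explicit uniform rate $n^{-1/2+\ep}$ holding with probability at least $1-4Cn^{-2\ep}$, and is the formulation that feeds the central limit refinement mentioned right after the Fact. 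You instead prove convergence in probability at each fixed $s$ and upgrade it by a P\'olya-type sandwich, exploiting that $t\mapsto U_t$ is non-increasing while $u_s=e^{-\bar\mu s}$ is continuous and strictly decreasing, so a union bound over a finite grid suffices. This is more elementary (no maximal inequality) and is a good general device for monotone prelimits, but it yields no rate and leans on monotonicity, which the paper's argument does not use. One caveat, which applies equally to the paper: the identity $E(\Delta U_t\mid{\cal F}_t)=-U_t\bar\mu/n$ is exact only at steps with ${\cal A}_t\ne\emptyset$; at a restart step the vertex drawn from ${\cal U}_t$ is itself removed. Your parenthetical that restarts contribute only $O(1)$ corrections is justified up to the time the giant component is exhausted --- the only range where the Fact is actually used --- since whp only $O(\log n)$ steps are spent in other components before then; over the full horizon, however, the number of restarts is of order $n$ (most components of $G(n,\bar\mu/n)$ are isolated vertices), so under the bookkeeping $U_t=n-t-A_t$ neither your argument nor the paper's literally gives uniformity on all of $[0,1]$. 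The paper passes over this silently; you at least flag it.
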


\noindent 
When $U_t + R_t=n$ we have $A_t=0$. This occurs at the $y_0>0$ that satisfies
$e^{-\mu\tau^f y_0} = 1-y_0.$ $y_0$ gives the fraction of sites in the giant component and $1-z_0$.

\subsection{Fixed time infections with rewiring}

We now introduce rewiring of $S-I$ edges at rate $\rho$, i.e., susceptibles break their connection with infected individuals and rewire to an individual chosen at random. In order for the infection to be transmitted along an edge, it must occur  before any rewiring and before time 1. To compute this probability, note that (i) the probability that infection occurs before rewiring is $\lambda/(\lambda+\rho)$ and (ii) the minimum of two independent exponentials with rates $\lambda$ and $\rho$ is an exponential with rate $\lambda+\rho$, so the transmission probability is
\beq
\tau^f_r = \frac{\lambda}{\lambda+\rho}(1-e^{-(\lambda+\rho)}).
\label{taufr}
\eeq
Here the `$r$' subscript is for ``rewire." Our first result shows that evoSIR has the same critical value as delSIR.

\begin{theorem} \label{ftcrit}
The critical value for the fixed time epidemic with rewiring is given by the solution of $\mu\tau^f_r=1$. 
Moreover, if $\lambda < \lambda_c$, then the ratio of the expected epidemic size in delSIR to the size in evoSIR converges to 1.
\end{theorem}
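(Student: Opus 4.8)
The plan is to push the reduction behind Fact~1 through delSIR and then transfer everything to evoSIR by a monotone coupling. For delSIR, an $S$--$I$ edge passes the infection exactly when its infection clock rings before both its independent deletion clock and time $1$, an event of probability $\tau^f_r$ that depends only on that edge's own clocks and is therefore independent across edges. The key structural point is that deletions act only on $S$--$I$ edges, so an edge running from the infected vertex currently being explored to an as-yet-unexplored (hence susceptible) vertex has only ever been $S$--$S$, has never been exposed to deletion, is still present with the original probability $\mu/n$, and transmits with probability $\tau^f_r$. Consequently the exploration of the infected cluster coincides in law with the exploration of $G(n,\mu\tau^f_r/n)$, so delSIR inherits Fact~1 verbatim with $\tau^f$ replaced by $\tau^f_r$: its critical value solves $\mu\tau^f_r=1$, and for $\mu\tau^f_r<1$ its expected epidemic size converges to $1/(1-\mu\tau^f_r)$.

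I would then couple evoSIR to delSIR on the same graph, driving both by the same infection and deletion/rewiring clocks (coupling the memoryless delays, not the absolute times) so that every deletion in delSIR is the corresponding rewiring in evoSIR. The rewired edge is an extra edge carrying its own fresh clock and can only create additional infections, so every vertex infected in delSIR is infected in evoSIR; thus $S^{evo}\ge S^{del}$ pointwise and $\lambda^{evo}_c\le\lambda^{del}_c$. In particular, when $\mu\tau^f_r>1$ the giant component of $G(n,\mu\tau^f_r/n)$ already forces a large epidemic in delSIR, hence in evoSIR.

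It remains to handle $\lambda<\lambda_c$, which yields both the reverse bound $\lambda^{evo}_c\ge\lambda^{del}_c$ and the ratio. The observation is that a rewired edge attaches the susceptible endpoint to a uniform vertex, which is currently infected only with probability of order $I_t/n$, so while few individuals are infected rewiring is almost never infectious. I would make this quantitative with a stopping-time argument: explore the evoSIR cluster and stop the first time the number of infected exceeds $K_n=C\log n$. Up to that time each rewiring produces an extra infection with probability $O(K_n/n)$, so the exploration is dominated by a Galton--Watson process of mean offspring $\mu\tau^f_r+O(K_n/n)$, which is below $1$ for large $n$. A subcritical branching process has total progeny with an exponential tail, so the threshold is surpassed with probability at most $e^{-cK_n}$; taking $C$ large makes $n\,e^{-cK_n}\to 0$, which simultaneously rules out a large evoSIR epidemic and shows that the rare event $\{S^{evo}>K_n\}$ contributes only $o(1)$ to the mean. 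Hence $E[S^{evo}]\le 1/(1-\mu\tau^f_r)+o(1)$, and combined with $E[S^{evo}]\ge E[S^{del}]=1/(1-\mu\tau^f_r)+o(1)$ the ratio $E[S^{del}]/E[S^{evo}]$ tends to $1$.

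The main obstacle is this last step, because rewiring-induced infections can themselves spawn further rewirings, and a naive bound on the extra infections is circular. The stopping-time/branching-process domination is designed precisely to break the feedback: capping the infected count makes the per-step extra offspring uniformly $o(1)$, so the capped process is genuinely subcritical. The delicate points are to verify the offspring bound uniformly up to the stopping time and to confirm that the excluded high-count event is rare enough that it does not inflate the expected size.
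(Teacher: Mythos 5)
Your delSIR percolation reduction and the monotone coupling (giving $\lambda_c(\hbox{evoSIR}) \le \lambda_c(\hbox{delSIR})$ and the supercritical direction) are exactly the paper's first steps, and your subcritical strategy---show that rewiring almost never matters while the cluster is $O(\log n)$---is also the paper's intuition. But the quantitative heart of your argument has a gap. You bound the damage of a rewiring by the probability that its uniform target is \emph{currently} infected, $O(K_n/n)$, and conclude a Galton--Watson domination with mean offspring $\mu\tau^f_r+O(K_n/n)$. This accounts only for rewired edges that are infectious at the moment they are created. A rewired edge whose target is susceptible does not go away: it persists as an $S$--$S$ edge and becomes infectious \emph{later} if either endpoint is subsequently infected while the other is still susceptible; for instance the rewiring vertex $y$ may be infected afterwards through a different edge and then transmit across the rewired edge, something that cannot happen in delSIR. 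These dormant edges inflate the offspring counts of vertices explored later, so the per-rewiring bound you state does not by itself yield the claimed offspring domination. The stopping time caps the infected count, but it does not break this particular feedback, which is a per-vertex rather than per-rewiring phenomenon; and if one instead reads your claim as ``each rewiring \emph{eventually} produces an extra infection with probability $O(K_n/n)$,'' the bound refers to the future of the very exploration being dominated, which is the circularity you were trying to avoid.

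The theorem is of course still true and your architecture can be repaired, but the repair needs two additional arguments: (i) a vertex that both performs a rewiring and is later infected must have at least two edges into the infected cluster, an event whose expected multiplicity over the capped exploration is $O(K_n^2/n)$; and (ii) the $O(K_n\log n)$ uniform rewiring targets must whp avoid the at most $K_n$ vertices that are ever infected, where the conditioning is delicate because ``ever infected'' is not independent of the targets. The paper sidesteps exactly this conditioning problem with a different device: it runs delSIR to completion \emph{first}, then rewires the deleted edges using randomness independent of the delSIR run, and counts the number $Y$ of rewired edges landing in the \emph{final} removed set ${\cal R}'$ (not the currently infected set---this is what neutralizes the target-side dormancy). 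Since $|{\cal R}'|=O(\log n)$ and there are $O(\log^2 n)$ deleted edges whp, $P(Y\ge 1)=O(\log^3 n/n)$ and, by a binomial second-moment lemma, $P(Y\ge 2)=O(\log^6 n/n^2)$; the case $Y=1$ is handled by running a second, independent delSIR stage from the single re-entry point on a slightly denser graph. That two-stage structure replaces your Galton--Watson domination and directly delivers both the critical-value claim and $ER = ER' + o(1)$, from which the ratio statement follows since $ER'\ge 1$; your route through the explicit limit $1/(1-\mu\tau^f_r)$ is fine but not needed.
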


The formula for the critical value is easily seen to be correct for the delSIR since, by the reasoning above, there is a large epidemic if and only if the reduced graph in which edges are retained with probability $\tau^f_r$ has a giant component.  It is clear than the delSIR model has a larger critical value than evoSIR. Thus, we only have to prove the reverse inequality. Intuitively, the equality of the two critical values holds because a subcritical delSIR epidemic dies out quickly, so it is unlikely that a rewiring will influence the outcome. We prove this in Section \ref{sec:pfth2}. 

When $n$ is large, the degree distribution, which is Binomial($n-1,\mu/n$), is approximately Poisson with mean $\mu$. Due to Poisson thinning,  the number of new infections directly caused by one $I$ is asymptotically Poisson with mean $\mu\tau^f_r$, and hence has limiting generating function
\beq
\hat G(z) = \exp(-\mu\tau^f_r (1-z)).
\label{hatG}
\eeq

\begin{theorem} \label{ftperc}
If $z_0$ is the fixed point $< 1$ of $\hat G(z)$, then $1-z_0$ gives the probability of a large delSIR or evoSIR epidemic.
\end{theorem}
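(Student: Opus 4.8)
The plan is to establish the value $1-z_0$ separately for the two models, using a percolation reduction for delSIR and two complementary couplings for evoSIR.

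For delSIR the result is essentially Fact 1. Because each infection lasts exactly time $1$, every edge is of type $S$-$I$ at most once, and at that moment it transmits precisely when the infection clock (rate $\lambda$) fires before both the rewiring clock (rate $\rho$) and time $1$; this happens with probability $\tau^f_r$ and, given the graph, independently across edges. Since a deleted edge can never carry infection, the deletions do not change which vertices are eventually removed. Hence the set of removed vertices in delSIR is exactly the connected component of the initial vertex in the graph that keeps each edge of $G(n,\mu/n)$ with probability $\tau^f_r$, which by Poisson thinning is $G(n,\mu\tau^f_r/n)$. Applying Fact 1 with $\tau^f_r$ in place of $\tau^f$ — so that the relevant generating function is exactly $\hat G$ — gives that the probability of a large delSIR epidemic is $1-z_0$.

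For the evoSIR lower bound I would build a monotone coupling showing that every vertex removed in delSIR is also removed in evoSIR. Assign to each original edge $e$ a single pair of delays $(D_e,W_e)$ with $D_e\sim\exp(\lambda)$ and $W_e\sim\exp(\rho)$, used in both models at the unique time $e$ first becomes $S$-$I$, and declare $e$ \emph{open} when $D_e<\min(W_e,1)$. The delSIR removed set is then the open cluster of the initial vertex. Given an open path $1=v_0,v_1,\dots,v_k$ in delSIR, an induction shows each $v_j$ is infected in evoSIR: when $v_j$ becomes infected, either $v_{j+1}$ has already been infected, or it is still susceptible and the edge $v_jv_{j+1}$ — which cannot yet have been rewired away, since it has not previously been $S$-$I$ — transmits because it is open. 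Thus the evoSIR removed set contains the delSIR one, and the probability of a large evoSIR epidemic is at least $1-z_0$.

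For the matching upper bound I would run the cluster-exploration described above for evoSIR and argue that rewiring is invisible until the explored set is large. Each vertex processed by the exploration triggers $O(\mu)$ rewirings, and each lands on a uniformly random vertex, hence inside the current cluster (of size at most $M_n$) with probability $O(M_n/n)$; over the first $M_n$ steps the expected number of rewirings that fall inside the cluster is therefore $O(\mu M_n^2/n)$, which is $o(1)$ once $M_n\to\infty$ with $M_n=o(n^{1/2})$. So whp the evoSIR and delSIR explorations coincide up to size $M_n$, and the branching-process approximation underlying Fact 1 shows that the probability of reaching size $M_n$ converges to the survival probability $1-z_0$. Since a large epidemic has $\Omega(n)\ge M_n$ removed vertices, it must pass through size $M_n$, so the probability of a large evoSIR epidemic is at most $1-z_0+o(1)$. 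Combining the two bounds yields $1-z_0$ for evoSIR as well. I expect the main obstacle to be this upper-bound step, where one must simultaneously verify that the window $M_n=o(n^{1/2})$ is small enough for rewiring to be negligible yet, with $M_n\to\infty$, large enough that ``reaching level $M_n$'' has probability converging to the survival probability of the Poisson$(\mu\tau^f_r)$ branching process with generating function $\hat G$; the delSIR reduction is immediate and the monotone coupling for the lower bound is clean, so the delicate bookkeeping lies entirely in coupling error versus intermediate-scale branching approximation.
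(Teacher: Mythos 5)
Your proposal is correct and follows essentially the same route as the paper: the paper likewise obtains the delSIR value from the percolation/branching description, gets the evoSIR lower bound from the monotone delSIR--evoSIR coupling, and proves the matching upper bound by running the joint exploration of the two processes to an intermediate scale ($r=\beta\log n$ steps there, your $M_n$ here) and showing that whp no rewired edge lands back in the explored set, so the two explorations coincide. The only cosmetic differences are that the paper measures collisions against the explored set at the final exploration time (of size $O(\log^2 n)$, giving a collision probability $O(\log^4 n/n)$) rather than the ``current cluster,'' and cites specific steps of the giant-component argument in \cite{RGD} where you invoke a generic branching-process approximation at scale $M_n=o(\sqrt{n})$.
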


In the case of the delSIR model, $1-z_0$ is the fraction of individuals infected in a large epidemic. This proportion goes to 0
at the critical value $\mu_c = 1/\tau_r ^f =1$. In evoSIR, we conjecture, but are not able to prove that the limiting fraction infected is larger than the probability of a large epidemic. More surprisingly, as the simulations in Figures \ref{fig:constrho4} suggest, it is discontinuous at the critical value. 
Here, we have plotted the final size for a large number of
simulations at each parameter value, so there are many points near the $x$ axis that correspond to epidemics that died out. 

\begin{figure}[h] 
  \centering
  \includegraphics[width=2.98in,height=1.92in,keepaspectratio]{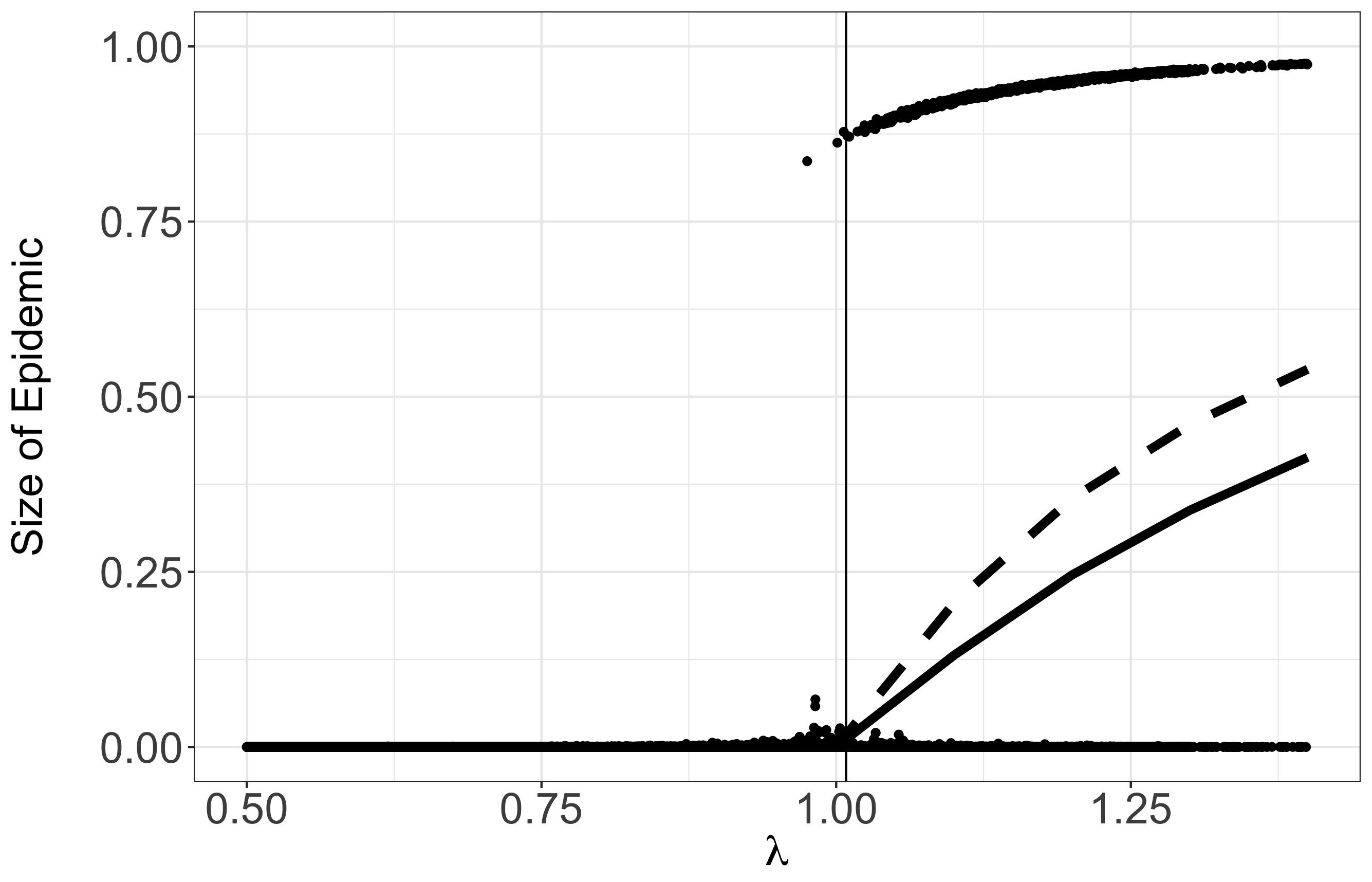}
  \includegraphics[width=2.98in,height=1.92in,keepaspectratio]{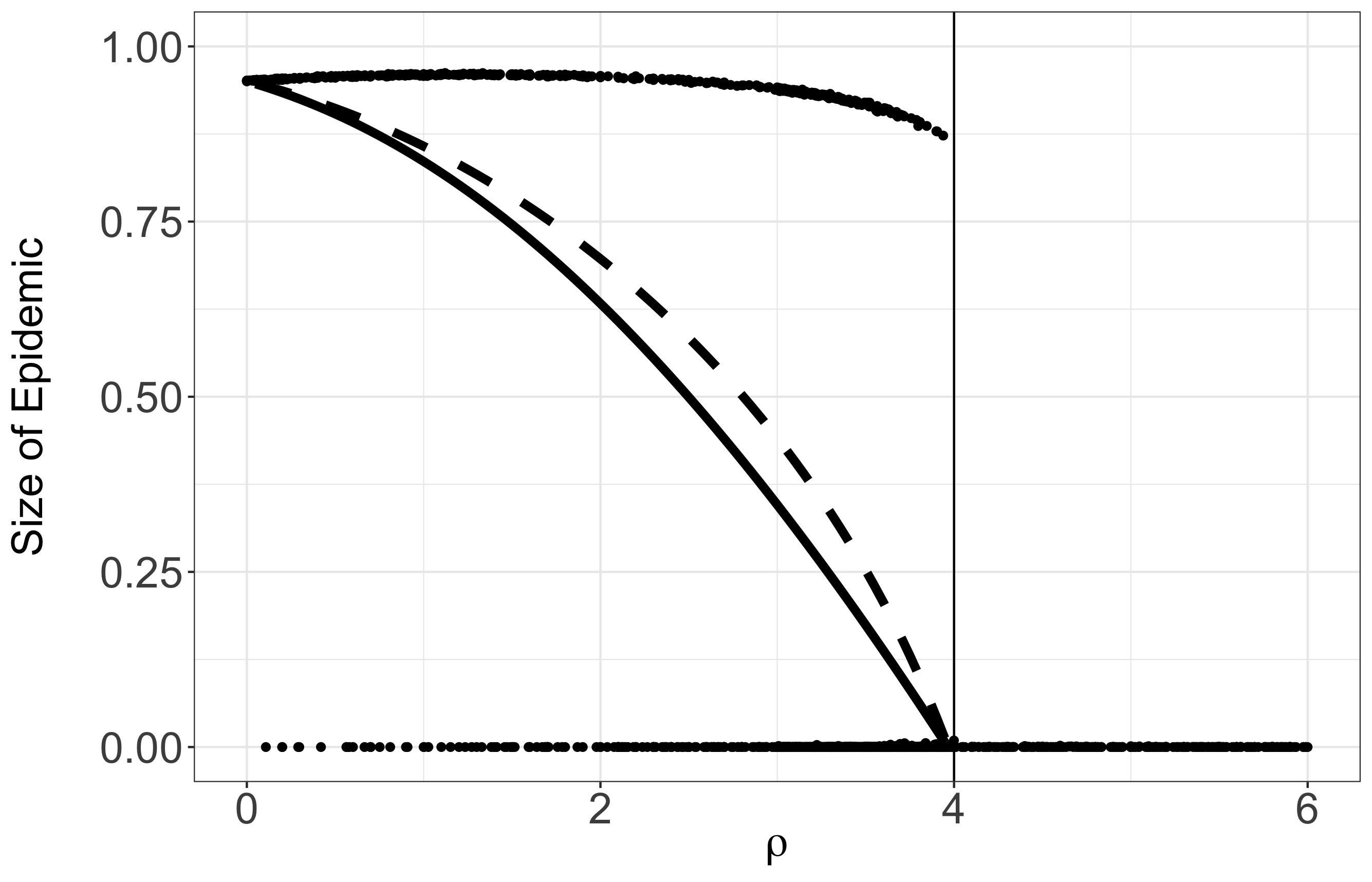}
 \caption{\fns Simulation of the fixed time evoSIR on an Erd\H os-Renyi graph with $\mu=5$. In the left panel, $\rho=4$ and 
$\lambda$ varies with  $\lambda_c \approx 1.0084$ in agreement with Theorem \ref{ftcrit}. 
The solid curve is the final size of the delSIR epidemic with the same parameters. The dashed line above it is an approximation that comes from Theorem \ref{finalsize}. In the right panel, $\lambda =1$ and $\rho$ varies with $\rho_c \approx 4$. Note that the final size is increasing for small $\rho$. This phenomenon is seen in  \cite[Figure 1]{LBSB}, but their simulation does not show a discontinuous phase transition.}
  \label{fig:constrho4}
\end{figure}

Figure \ref{fig:consttimecrit}, which gives a simulation of the epidemic at the critical value, 
gives some insight into why there is a discontinuity. Initially, the infection is critical but becomes supercritical
as degrees of susceptible nodes are increased by rewiring. As the graph shows, we get some large epidemics when $\lambda < \lambda_c$.  This is a finite size effect. As $n\to\infty$, it follows from Theorem \ref{ftperc} that the probability of a large epidemic in the subcritical case converges to 0.

\begin{figure}[h] 
  \centering
  \includegraphics[height=2.5in,keepaspectratio]{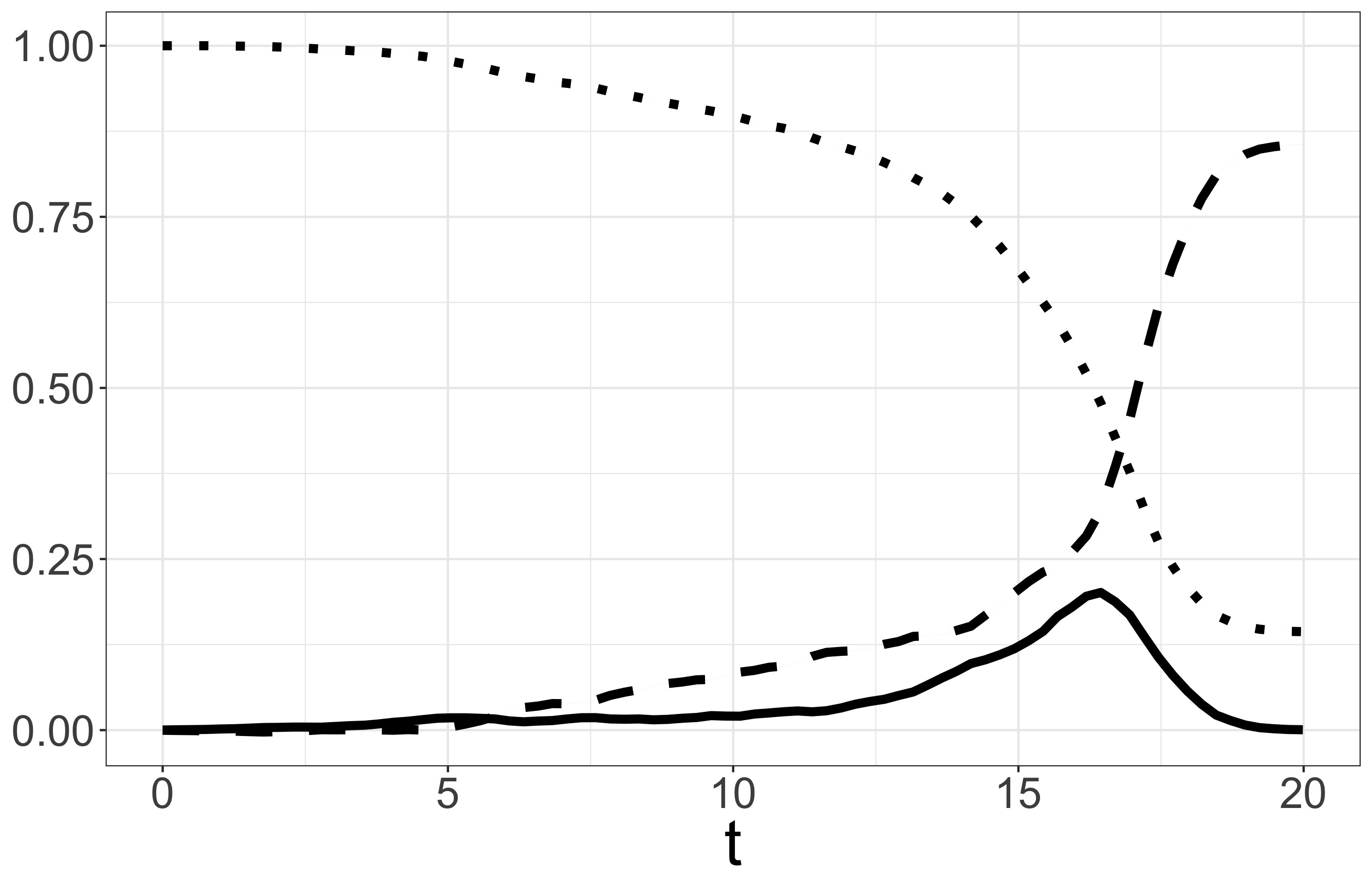}
  \caption{\fns Simulations of the fixed time evoSIR with $\mu=5$ and $\rho=4$ at the critical value $\lambda_c = 1.0084$. The solid curve
is the number of infecteds $I_t$. At the beginning it has slope 0, but due to rewiring the slope becomes positive, before the curve drops to 
zero due to the susceptible population being depleted. The dashed line is $R_t$, the dotted line $S_t$. }
        \label{fig:consttimecrit}
\end{figure}

\subsection{SIR epidemics with exponential infection times}

Now we suppose that the infection times are exponentially distributed with mean 1. The memoryless property of the exponential makes the process Markovian, but, as explained below, we lose the simple connection to percolation.  Suppose the infection time distribution $T$ has density $e^{-t}$. 
Introducing a superscript $e$ for exponential, the transmission probability is
\beq
\tau^e = 1 - \int_0^\infty dt \, e^{-t} e^{-\lambda t} = \frac{\lambda}{1+\lambda} 
\label{taue}
\eeq
i.e., the probability an exponential($\lambda$) infection time occurs before an exponential(1) recovery. 
The expected number of infections is $\mu\tau^e$ so the threshold 
for a large epidemic is
\beq
\lambda^e_c = \frac{1}{\mu-1}.
\label{expcrvcal}
\eeq

In the exponential case one cannot easily reduce the process to percolation since
the infection status of the edges going out of a vertex are correlated. Kuulasmaa (1982) came up with the
following construction. We replace each edge by a pair of oriented edges. For each vertex we have
dependent coin flips to determine whether we keep the edges. Kuulasmaa was able to prove some results
using this structure, but it is not easy to use. Here we mention it primarily to explain why the probability
of a large epidemic can be different from the fraction of individuals affected by one.

To compute the generating function of the number of infections directly caused by one infected, we note that if we condition on the value of the infection time $T$, we can conclude easily that the answer is
\beq
\hat G(z) = EG(e^{-\lambda T} + z[1-e^{-\lambda T}]).
\label{gengf}
\eeq
See \cite[Theorem 3.5.1]{RGD} for more details.
In the Poisson case $G(z) = \exp(-\mu(1-z))$ so writing this out gives 
\beq
\hat G(z) = e^{-\mu(1-z)} \int_0^\infty dt \, e^{-t} \exp(\mu(1-z)e^{-\lambda t}). 
\label{etgf}
\eeq

As in the fixed time setting, $\hat G$ can be used to compute the probability of a large epidemic.

\begin{fact}
If $z_0$ is the fixed point of $\bar G(z)$ that is $< 1$, then $1-z_0$ gives the probability of a large epidemic.
\end{fact}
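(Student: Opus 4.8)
The plan is to approximate the early phase of the exponential-time epidemic by a Galton--Watson branching process whose offspring generating function is exactly the $\hat G$ of \eqref{gengf}, and then to argue that a large epidemic occurs if and only if this branching process survives. The first step is to set up the branching process explicitly. Starting the exploration from the initially infected vertex and proceeding as in the Martin-L\"of algorithm sketched above, at each step I reveal the susceptible neighbors to which a newly infected vertex passes the infection before it recovers. Conditioned on that vertex's infection time $T$, which is exponential$(1)$, each of its susceptible neighbors is infected independently with probability $1-e^{-\lambda T}$. Since the degree distribution is asymptotically Poisson$(\mu)$ and the infection times are independent across vertices, the number of children of each explored vertex converges to an i.i.d.\ copy of the mixed law with generating function $\hat G(z) = E\,G(e^{-\lambda T} + z(1-e^{-\lambda T}))$. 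The important point is that, although the infection statuses of the edges emanating from a common vertex are correlated through the shared $T$, this correlation is entirely internal to that single vertex's offspring count and is already encoded in $\hat G$; the offspring counts of distinct explored vertices stay independent throughout the tree-like regime, so the branching approximation remains legitimate even though the percolation reduction used in the fixed-time case is unavailable.

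Next I would invoke the standard extinction theory. For a Galton--Watson process with generating function $\hat G$, the extinction probability is the smallest root $z_0 \in [0,1]$ of $\hat G(z)=z$, and when the mean offspring number $\hat G'(1) = \mu\tau^e$ exceeds $1$ (equivalently $\lambda > \lambda_c^e$ as in \eqref{expcrvcal}) this root satisfies $z_0 < 1$, so the survival probability is $1-z_0$. It then remains to transfer this to the epidemic. For the upper bound I would couple the epidemic exploration to the branching process so that, while only $o(n)$ vertices have been removed, the two processes agree and the epidemic is dominated by the branching process; hence if the branching process dies out the epidemic removes only $o(n)$ vertices, and the probability of a large epidemic is at most $1-z_0$.

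The harder direction, and the main obstacle, is the reverse implication: that survival of the branching process produces a macroscopic epidemic \emph{wpp}. I would run the exploration in two stages. In the first stage I continue until the active set $A_t$ first exceeds a slowly growing threshold $a_n$ with $a_n\to\infty$ and $a_n = o(n)$, say $a_n = n^{1/4}$. Up to this time the susceptible pool is still $n(1-o(1))$, the coupling to the supercritical branching process is valid, and the probability that the process reaches $a_n$ rather than dying converges to $1-z_0$. In the second stage I must show that, because the depletion of susceptibles is still negligible once $A_t$ reaches $a_n$, the effective reproduction number stays bounded above $1$, so that a fluid-limit argument of the type underlying Fact \ref{MLft} forces $A_t + R_t$ to grow to $\Omega(n)$ \emph{whp}. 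Combining the two stages matches the lower bound $1-z_0$ with the upper bound and completes the proof. The delicate part is this second stage: since the simple percolation reduction fails in the exponential case, I cannot read off the giant-component size from a reduced graph and must instead track the Markovian dynamics directly, establishing the needed concentration for $A_t$ and $U_t$ so that the trajectory follows its deterministic limit away from extinction.
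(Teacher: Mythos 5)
Your proof is correct and takes essentially the route the paper has in mind: the paper states this result as a \emph{Fact} (i.e., a known result) and justifies it only by the remark that the early epidemic is well approximated by a branching process with offspring generating function $\hat G$ from \eqref{gengf}, citing Section 2.2 and Theorem 3.5.1 of \cite{RGD}, which is exactly the approximation-plus-extinction-theory argument you flesh out. Your two-stage exploration (couple to the branching process until the active set reaches a diverging threshold $a_n = o(n)$, then use concentration of the exploration dynamics to force $\Omega(n)$ removals \emph{whp}) is the standard way those cited results are established, so there is no substantive difference in approach.
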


In the case of exponential infection times, the final size of a large epidemic is not the same as the probability of a large epidemic.

\begin{theorem} \label{sizeexp}
Recall $\tau^e = \lambda/(1+\lambda)$.
The fraction of individuals infected in a large epidemic for SIR with unit exponential infection times is $1-z_0$ where $z_0$ is the fixed point $< 1$ of 
$$
\exp(-\mu\tau^e(1-z)) = z.
$$
\end{theorem}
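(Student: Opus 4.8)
The plan is to exploit the forward--backward asymmetry that the text has already flagged as the reason the final size and the probability of a large epidemic differ in the exponential case. Using Kuulasmaa's directed construction, I replace each undirected edge by the two oriented edges $(w,v)$ and $(v,w)$, and declare $(w,v)$ \emph{open} if $w$, were it infected, would pass the infection to $v$ before recovering. Averaging the conditional transmission probability $1-e^{-\lambda T_w}$ over the infectious period $T_w$ shows each oriented edge is open with probability $\tau^e=\lambda/(1+\lambda)$. The crucial structural point is that the oriented edges pointing \emph{into} a fixed vertex $v$ are mutually independent: $(w,v)$ depends only on $T_w$ and the clock on that edge, and distinct in-neighbors carry independent infectious periods. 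By contrast, the edges pointing \emph{out} of a common vertex share a single $T$ and are positively correlated. This correlation is exactly what makes the forward offspring law $\hat G$ in~(\ref{etgf}) an overdispersed mixed Poisson rather than a thinned Poisson, so the final size must be read off from the backward (in-cluster) structure and not from $\hat G$.

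Next I would identify the fraction infected in a large epidemic with the asymptotic fractional size of the giant out-component of the directed graph. A vertex $v$ is infected precisely when there is an open oriented path from the initially infected vertex to $v$; equivalently, the source lies in the \emph{in-cluster} of $v$, the set of vertices that can reach $v$. A large epidemic occurs exactly when the source has a macroscopic out-cluster, i.e.\ the source can reach the giant strongly connected component; on this event a uniformly chosen $v$ is infected, up to $o(n)$ vertices, iff $v$ is reachable from that component, which happens iff the in-cluster of $v$ is itself macroscopic. Thus the limiting fraction infected equals the probability that the in-cluster of a typical vertex is large.

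I would then estimate that probability by the standard branching-process approximation, exactly parallel to the Martin-L\"of exploration behind Fact~\ref{MLft} but run \emph{backwards}. Exploring the in-cluster of $v$, at each newly examined vertex the number of open in-edges is, because in-edges are independent and $G(n,\mu/n)$ is locally tree-like, asymptotically a Poisson($\mu$) number of neighbors thinned by $\tau^e$, hence Poisson($\mu\tau^e$); the Poisson excess-degree property of the Erd\H os-R\'enyi graph keeps the offspring law equal to Poisson($\mu\tau^e$) at every generation, even though we arrived along an already-examined edge (that edge depends on the \emph{current} vertex's period, which is independent of its other in-edges). The in-cluster is therefore macroscopic with probability $1-z_0$, where $z_0<1$ is the extinction probability of a Poisson($\mu\tau^e$) branching process, i.e.\ the fixed point $<1$ of $z=\exp(-\mu\tau^e(1-z))$. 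Combining with the previous paragraph gives that the fraction infected in a large epidemic converges to $1-z_0$, as claimed; note this is consistent with the final size exceeding the probability of a large epidemic, since the two processes share the mean $\mu\tau^e$ but the forward law is overdispersed and hence has the larger extinction probability.

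The main obstacle will be the rigorous version of the second and third paragraphs: turning the local backward-branching heuristic into a genuine limit requires (i) justifying the independence of the in-edges and the locally-tree-like coupling of the in-cluster exploration with the Poisson($\mu\tau^e$) branching process on the relevant $O(\log n)$ scale, and (ii) controlling the conditioning on a large epidemic so that ``reachable from the source'' may be replaced by ``in-cluster is macroscopic'' up to $o(n)$ vertices. The first is handled by the exploration-plus-martingale concentration estimates already used for Fact~\ref{MLft}, now applied to the backward exploration. The second rests on the uniqueness of the giant strongly connected component in the supercritical directed percolation, which lets one equate the infected set with the giant out-component. Throughout, care must be taken to use the backward process and not the forward offspring law $\hat G$, since it is precisely their inequality that makes the final size differ from the probability of a large epidemic.
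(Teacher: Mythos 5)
Your approach is correct in outline but genuinely different from the paper's. The paper proves Theorem \ref{sizeexp} (Section \ref{sec:MLexp}) with a \emph{forward} Martin-L\"of exploration: when the $s$th vertex is explored, its infectious period $T_s$ is drawn and added to the filtration, so that its connections to the $U_s$ unexplored vertices are conditionally i.i.d.\ Bernoulli$(\mu\tau_s/n)$ with $\tau_s=1-e^{-\lambda T_s}$; the product martingale $X_s=U_s\prod_{r<s}(1-\mu\tau_r/n)^{-1}$, the $L^2$ maximal inequality, and the law of large numbers applied to $\sum_r\tau_r$ give $U_{[ns]}/n\to e^{-\mu\tau^e s}$, and the final size is read off from the crossing of $e^{-\mu\tau^e s}$ with $1-s$. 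The correlation among out-edges never causes trouble there, because it is absorbed by conditioning on $\tau_s$, and no directed-graph duality is needed. You instead use the Kuulasmaa directed-percolation duality (the ``epidemic percolation network'' route of Kenah and Robins): final size $=$ density of vertices with macroscopic in-cluster $=$ survival probability of the backward Poisson$(\mu\tau^e)$ branching process. This is conceptually illuminating --- it explains \emph{why} the final size is governed by the mean transmissibility while the outbreak probability is governed by the mixed-Poisson law $\hat G$ of \eqref{etgf}, and your Jensen-type remark that the forward law's overdispersion forces outbreak probability $\le$ attack rate is correct --- but it is technically heavier than the paper's argument in two places you partly gloss over. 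First, independence of the in-edges into a \emph{fixed} vertex is not quite enough for your backward exploration: edges from a common unexplored vertex $u$ into two distinct already-explored vertices share $T_u$, so the event that $u$ was not discovered earlier tilts $T_u$ downward and makes later in-edges from $u$ slightly less likely to be open. This is an $O(t/n)$ correction, harmless on the $O(\log n)$ and $\epsilon n$ scales where you invoke the branching approximation, but it must be controlled explicitly rather than dismissed by in-edge independence; notably it is exactly the issue the forward exploration never encounters. Second, your step ``infected set equals the set of vertices with macroscopic in-cluster up to $o(n)$'' needs a uniqueness/sprinkling argument for the giant out-component of a \emph{dependent} directed percolation model, which cannot be quoted off-the-shelf for independent random digraphs and would have to be reproved (doable, again because the backward explorations use only in-edges). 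In short, the paper's proof is more elementary and self-contained --- one martingale plus the law of large numbers --- while your route buys the structural explanation of the forward/backward asymmetry at the cost of importing and adapting nontrivial directed-graph machinery.
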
 

\noindent
This may be a known result, but we do not have a reference. We prove this in Section \ref{sec:MLexp} by using a variant of the calculation of Martin-L\"of \cite{ML}. The inequality $1+x<e^x$, implies $1/(1+\lambda) > e^{-\lambda}$ and it follows that 
$$
\tau^e  = 1- \frac{1}{1+\lambda} < 1 -e^{-\lambda} = \tau^f.
$$ 
Thus, the epidemic is larger for exponential infection times than for fixed times with the same mean.

\subsection{Limiting ODEs limit for SIR epidemics on graphs} \label{sec:ODE}

In this section we describe approaches to obtain ODE limits for SIR models on three random graphs: the complete graph, the configuration model, and the 
Erd\"os-Renyi random graph. As $n\to\infty$, the epidemic on the complete graph, i.e., the homogeneously mixing case, the system converges to an ODE:
\begin{align}
dS/dt & = -\beta SI/n 
\nonumber \\
dI/dt & = \beta SI/n - I 
\label{homomixODE}\\
dR/dt & = I
\nonumber
\end{align} 
See \cite[Section 7.3]{Allen}. 

The final size of the epidemic in the homogeneously mixing case is the same in the Erd\"os-Renyi graph,
but that does not mean that the ODE in \eqref{homomixODE} is correct for the epidemic on the Erd\H os-Renyi random graph. 
Volz (2008) was the first to derive a limiting ODE for an SIR epidemic on a graph generated by the configuration
model. Miller (2011) later simplified the derivation to produce a single ODE. 
Let $(u,v)$ be an oriented edge chosen at random from the graph and let $\theta(t)$
be the probability there has not been an infectious contact from $v$ to $u$. If we let $\psi(\theta)$ be the 
generating function of the degree distribution, then Miller's ODE is
\beq
\frac{d\theta}{dt} = - \beta \theta + \gamma(1-\theta) + \gamma \frac{\psi'(\theta)}{\psi(\theta)}
\label{MillerODE}
\eeq
where $\beta$ is the infection rate and $\gamma$ is the rate that infections become healthy. 
Given $\theta$, we have $S=\psi(\theta)$, $dR/dt = \gamma I$ and $I=1-R-S$.

The results of Volz and Miller were based on heuristic computations, but later their
conclusion was made rigorous by Decreusfond et al.\ (2012) and Janson et al.\ (2014). 
Here, we will derive an equation that is specific to the Erd\H os-Renyi graph, but that can more easily be extended to 
include rewiring.

When a site becomes infected, we add to the graph all edges connected to it that have not been revealed before. There will be no new edge connecting to a preexisting $I$ or an $R$, since the edge would have been revealed when that site first became infected. Edges to $S$ vertices will  exist with probability $\mu/n$ independent of what has happened before.

\begin{figure}[tbp] 
  \centering
  \includegraphics[width=3.07in,height=3.07in,keepaspectratio]{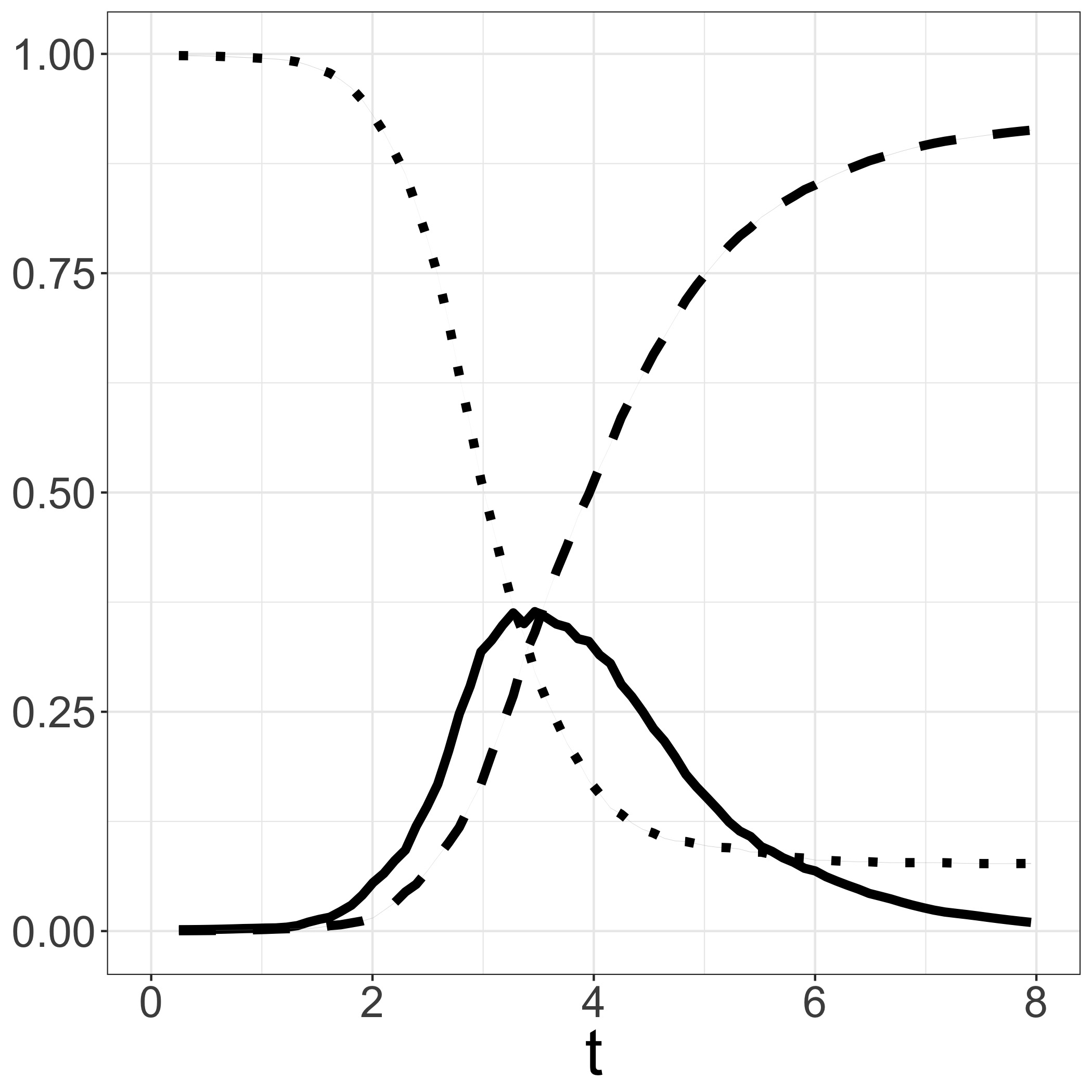}
  \includegraphics[width=3.07in,height=3.07in,keepaspectratio]{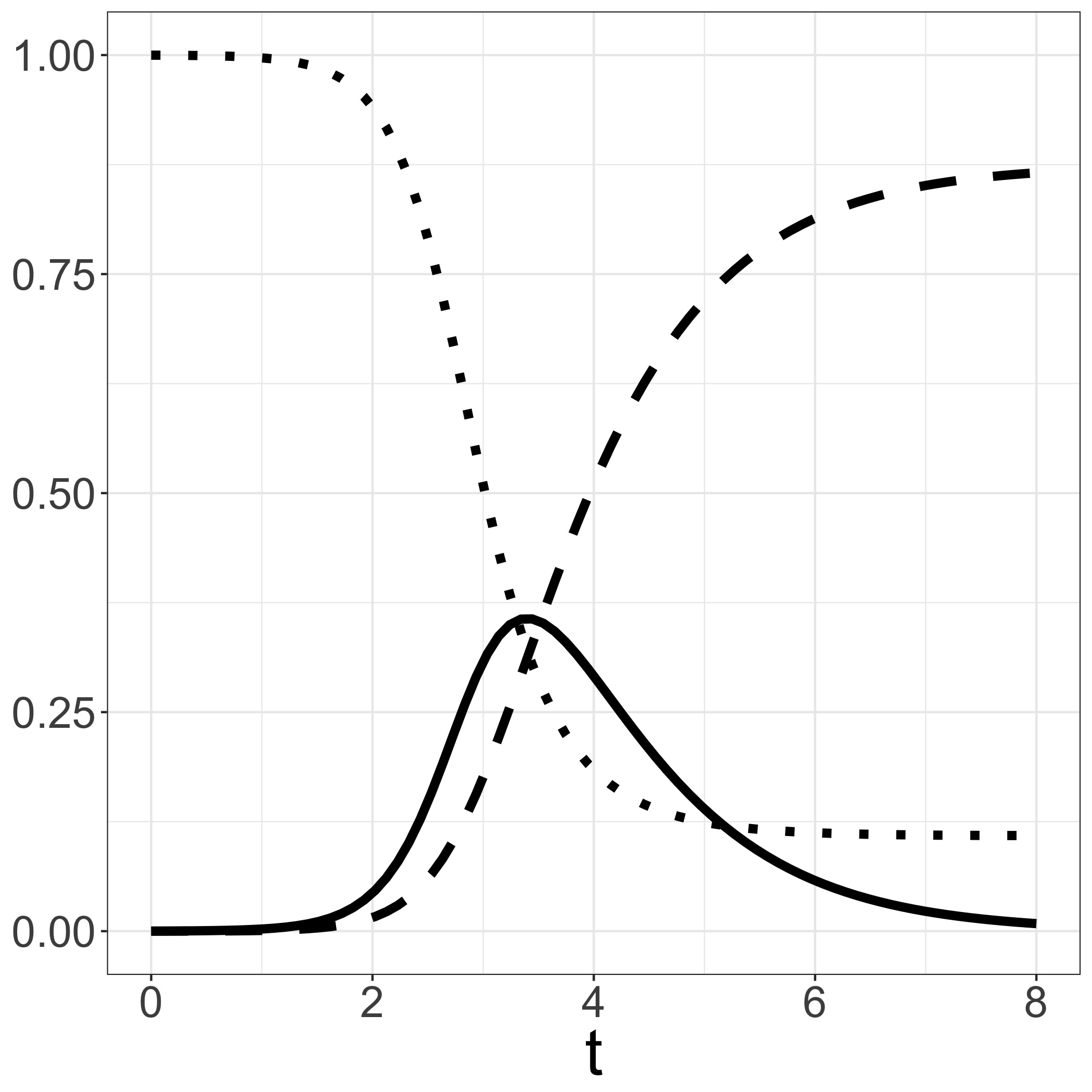} 
  \caption{\fns Comparison of simulation (left panel) with solution of ODE (right panel). The dotted line is $S_t$.
The solid curve is $I_t$. The dashed line is $R_t$.}
  \label{fig:ode3}
\end{figure}

Let $S_k$ be the number of susceptibles connected to $k$ infecteds. The unexplored vertices $S_0$ are not yet in the graph. 
If we let $S_{-1}=0$ and $F=\sum_k k S_k$, then the main equation can be written as:
\beq
\frac{dS_k}{dt} = - \lambda k S_k + \lambda F \frac{\mu}{n} (S_{k-1} - S_k) +  [(k+1) S_{k+1} - k S_k].
\label{SkODE}
\eeq
In words, an $S_k$ turns into an $I$ at rate $\lambda k$. The total rate at which new infections happen is $\lambda F$. When a new infection is created, it will be connected to an existing susceptible with probability $\mu/n$. This promotes an $S_j$ to $S_{j+1}$. In addition, infected edges become removed at rate 1. Thus $S_j$'s are demoted at rate $j$ to become $S_{j-1}$. The other two equations are simple
$$
\frac{dI}{dt}  = \lambda F - I \qquad
\frac{dR}{dt}  = I.
$$
Figure \ref{fig:ode3} shows that the simulation and differential equation agree. We have also verified that these curves agree with the
the solution of the Miller-Volz ODE.

Ball et al.~\cite{BBLS} do an in depth analysis of the delSIR model on MR and NSW configuration models. In \cite[Section 3]{BBLS}, they obtain an ODE limit that is related to
\eqref{SkODE}, but is more detailed because it considers the degrees of $S$, $I$, and $R$ vertices. This approach leads to results about the final size of a delSIR epidemic that include a central limit theorem.

\subsection{Exponential infection time with rewiring} 

The minimum of the recovery and rewiring times $T_r$ is exponential($1+\rho$) so
$$
\tau^e_r  = 1 - \int_0^\infty dt \, (1+\rho) e^{-(1+\rho)t} e^{-\lambda t} = \lambda/(\lambda+1+\rho)
$$
where again the subscript $r$ stands for ``rewire.'' Based on this reasoning,
the critical value is the solution to $\mu\lambda/(\lambda+1+\rho) = 1$. If $\mu$ and $\rho$ are fixed, solving gives
\beq
\lambda_c = \frac{1+\rho}{\mu-1}.
\label{cvSIRr}
\eeq

A result for configuration model graphs is given in (1) of Britton et al.~\cite{BJS}. They show that if the mean degree is $\mu$ and its variance is $\sigma^2$, then the branching approximation to the basic reproduction number is
\beq
R_0^{BA} = \frac{\beta}{\beta + \gamma + \omega} \left( \mu - 1 + \frac{\sigma^2}{\mu} \right).
\label{R0BA}
\eeq
If we change their notation to ours, the first term becomes $\lambda/(\lambda+1+\rho)$.
The expression in parentheses is the mean of the size biased degree distribution $q_{j-1} = kp_k/\mu$. When the degree distribution is
Poisson($\mu$), the size-biased degree distribution is also Poisson($\mu$). This implies that the quantity in parentheses is $\mu$,
but one can check that more directly by noting that when the degree distribution is Poisson $\sigma^2=\mu$.

The proof of Theorem \ref{ftcrit} generalizes easily to show

\begin{theorem} \label{expcrit}
\eqref{cvSIRr} gives the critical value for the delSIR and evoSIR models with exponential infection times. 
\end{theorem}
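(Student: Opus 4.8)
The plan is to mimic the proof of Theorem~\ref{ftcrit} with $\tau^f_r$ replaced everywhere by its exponential analogue $\tau^e_r=\lambda/(\lambda+1+\rho)$. I would first pin down the critical value of the delSIR model. The only ingredient of the fixed-time argument that fails here is the reduction to percolation: since the transmissions along the edges leaving one vertex share its common infection time $T$, they are correlated and the reduced graph is no longer Erd\H os-Renyi. But only the threshold is needed, and for that the branching approximation to the early epidemic suffices. Conditioning on $T$ as in \eqref{gengf}, each susceptible neighbor of an infected vertex is infected independently with probability $\frac{\lambda}{\lambda+\rho}(1-e^{-(\lambda+\rho)T})$; with a Poisson($\mu$) degree the number of direct infections is then Poisson with random mean $\mu\frac{\lambda}{\lambda+\rho}(1-e^{-(\lambda+\rho)T})$, and averaging over $T$ gives offspring mean $\mu\tau^e_r$. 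The approximating branching process is subcritical exactly when $\mu\tau^e_r<1$, i.e.\ when $\lambda<(1+\rho)/(\mu-1)$, which identifies \eqref{cvSIRr} as the delSIR critical value.

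Next I would compare the two models through the monotone coupling of Theorem~\ref{ftcrit}. Driving delSIR and evoSIR by the same exponential clocks, both break exactly the same $S-I$ edges; evoSIR merely adds, at each break, one fresh edge to a uniformly chosen vertex. These extra edges can only open new transmission routes, so the evoSIR removed set always contains the delSIR removed set. Hence $\lambda_c^{\mathrm{evo}}\le\lambda_c^{\mathrm{del}}$, and only the reverse inequality remains.

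For the reverse inequality I would argue that when $\lambda<\lambda_c^{\mathrm{del}}$ the rewiring almost surely does not matter. In this regime the coupled delSIR epidemic is subcritical, so whp it infects only $O(\log n)$ vertices and breaks at most $O(\log n)$ many $S-I$ edges; thus evoSIR performs at most $O(\log n)$ rewirings. Each rewiring attaches to a uniformly random vertex, so whp every rewiring target is a fresh susceptible outside the $O(\log n)$ vertices the epidemic touches. A rewired edge then joins a frontier susceptible $u$ to a fresh susceptible $w$ and can transmit only once $u$ is itself infected --- a second-order event whose total expected contribution I would bound by (number of rewirings) times (the chance the epidemic later reaches $u$ and crosses the new edge), which is $o(1)$. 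Consequently the infected sets of the two models coincide whp, the evoSIR epidemic dies out as well, and $\lambda_c^{\mathrm{evo}}\ge\lambda_c^{\mathrm{del}}$.

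The main obstacle is this last step: controlling the second-order infections along rewired edges. Unlike the fixed-time model, an infected vertex here can persist for a long exponential time and shed many $S-I$ edges, so one must check that the total number of rewirings is still $O(\log n)$ and, more delicately, that the compound event ``$u$ acquires a rewired edge and is later infected'' has vanishing total probability in the subcritical phase. This is precisely the estimate behind the ``ratio of expected sizes $\to 1$'' statement in Theorem~\ref{ftcrit}; since it relies only on the smallness of subcritical epidemics and the $O(1/n)$ chance of rewiring into the active region --- neither sensitive to the law of the infection time --- the fixed-time argument carries over with only cosmetic changes.
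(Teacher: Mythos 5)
Your proposal is correct and follows essentially the same route as the paper: identify the delSIR threshold from $\mu\tau^e_r=1$ via the branching approximation, get $\lambda_c^{\mathrm{evo}}\le\lambda_c^{\mathrm{del}}$ from the monotone coupling, and for the reverse direction use the fact that a subcritical delSIR epidemic touches only $O(\log n)$ vertices so that uniform rewirings whp never reconnect to the infected cluster --- this is exactly the paper's Lemma \ref{rtail}, proved for fixed infection times and then transferred to exponential times by substituting $\tau^e=\lambda/(\lambda+1)$ and $\alpha^e=\rho/(\lambda+1+\rho)$, as you propose. Your additional care with second-order transmissions along a rewired edge whose susceptible endpoint is later infected (an event requiring a cycle through the subcritical cluster, hence of probability $O(\log n/n)$ per rewiring) is a sound refinement of the same argument rather than a different approach.
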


This result is related to a remark in \cite{BJS}, ``We note that \eqref{R0BA} is independent of $\alpha$ (the fraction of edges that are
rewired), so it has no effect on the beginning of an outbreak if rewired edges are dropped, always attached to a new susceptible
or a mixture of the two.'' Here, we go beyond that heuristic and prove that the two critical values are equal.

To compute the generating function of the number of infections, we note that if we condition on the value of $T$,
the probability of transmission is
$$
\tau(T)  = \int_0^T dt \, \lambda  e^{-\lambda t} e^{-\rho t}  
= \frac{\lambda}{\lambda+\rho} (1-e^{-(\lambda+\rho)T})
$$
i.e., (a) infection occurs before rewiring and (b) the minimum of the infection and rewiring times occurs before time $T$. 
Using \eqref{gengf} and letting  $\mu_r = \mu\lambda/(\lambda+\rho)$, we see that in the Poisson case 
\beq
\bar G(z) = e^{-\mu_r(1-z)} \int_0^\infty dt \, e^{-t} \exp(\mu_r(1-z)e^{-(\lambda+\rho)t}).
\label{efpeq2}
\eeq

\begin{theorem} \label{exprwcrit}
If $z_0$ is the fixed point of $\bar G$, then the probability of a large epidemic is $1-z_0$
in delSIR  and evoSIR with mean one exponential infection times.
\end{theorem}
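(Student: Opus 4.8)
The plan is to show that the early spread of each epidemic, started from a single infected vertex, is governed by the Galton--Watson branching process whose offspring generating function is $\bar G$ from \eqref{efpeq2}, and that a large epidemic occurs precisely when this branching process survives, so that the probability of a large epidemic equals its survival probability $1-z_0$.

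I would begin with delSIR. When a vertex becomes infected we reveal its not-yet-explored edges; in $G(n,\mu/n)$ each unexplored vertex is a neighbor independently with probability $\mu/n$, so whp during the early phase every revealed neighbor is a fresh susceptible and the explored neighborhood is a tree. An infected vertex is infectious for an exponential$(1)$ time $T$, and conditionally on $T$ each incident $S$-$I$ edge independently passes the infection, before it is deleted and before recovery, with probability $\tau(T)=\frac{\lambda}{\lambda+\rho}\bigl(1-e^{-(\lambda+\rho)T}\bigr)$. By Poisson thinning the number of secondary infections is, given $T$, Poisson with mean $\mu\tau(T)=\mu_r\bigl(1-e^{-(\lambda+\rho)T}\bigr)$, and averaging the Poisson generating function over $T$ reproduces $\bar G$. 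Since the forward (excess) degree of a newly infected vertex is again Poisson$(\mu)$, every generation has this same offspring law, so the exploration of the infected cluster couples with the Galton--Watson process of generating function $\bar G$ until the number of explored vertices first exceeds a slowly growing cutoff. This is the branching-process approximation of Section 2.2 in \cite{RGD}.

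I would then convert the coupling into the two required bounds. For the upper bound, if the branching process goes extinct the coupling forces the explored cluster to stay $o(n)$, so no large epidemic occurs, and letting the cutoff grow gives $P(\text{large epidemic})\le 1-z_0+o(1)$. For the lower bound I would run the standard two-phase argument: condition on the branching process reaching a large fixed size $K$, an event of probability tending to $1-z_0$ as $K\to\infty$, and then show that once $K$ vertices are infected the epidemic reaches $\Omega(n)$ whp. In the fixed-time setting this last step follows at once from the giant-component estimate for the reduced Erd\H os-Renyi graph, but here the $S$-$I$ edges leaving a single vertex are correlated through the shared infectious time $T$, so that static percolation picture is unavailable; instead I would obtain the sweep-out from concentration of the exploration about the limiting ODE \eqref{SkODE}, much as Fact \ref{MLft} supplies it in the fixed-time case. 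Combining the two bounds yields $P(\text{large delSIR epidemic})\to 1-z_0$.

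Finally I would transfer the conclusion to evoSIR. The two models differ only in that a broken $S$-$I$ edge is reattached to a uniformly random vertex rather than deleted. While the cluster is $o(n)$ the infected-or-removed set is $o(n)$, so a rewired edge lands on a fresh susceptible with probability $1-o(1)$; such a rewiring does not add to the current infected's offspring count and is invisible to the branching approximation, so the early growth of evoSIR is again governed by $\bar G$ and survives with probability $1-z_0$. On the survival event the evoSIR epidemic is at least as large as its delSIR counterpart, since rewiring only raises susceptible degrees, hence still $\Omega(n)$, giving $P(\text{large evoSIR epidemic})\to 1-z_0$ as well. I expect the main obstacle to be exactly this lower-bound sweep-out: because the exponential case lacks the edge independence that reduces the fixed-time problem to percolation, the macroscopic growth must be controlled through the exploration dynamics, and for evoSIR one must additionally check that the rare early rewirings onto already-infected vertices do not disrupt that growth.
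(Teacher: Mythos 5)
Your proposal is correct and follows the same skeleton as the paper's proof: the coupling in which delSIR's removed set is contained in evoSIR's gives one inequality, and the reverse inequality comes from showing that, while the explored cluster is small, every rewired edge lands on a fresh susceptible whp, so the evoSIR exploration coincides with the delSIR exploration, whose survival probability converges to that of the branching process with offspring generating function $\bar G$ of \eqref{efpeq2}. Concretely, the paper runs the exploration to time $r=\beta\log n$, quotes Steps 2 and 4 of \cite[Theorem 2.3.2]{RGD} for the dichotomy (either dead by time $r$, or at least $\gamma\log n$ active vertices and then a large epidemic whp), and bounds by $O(\log^4 n/n)$ the chance that any of the $O(\log^2 n)$ rewired edges reattaches to the explored set; this is your two-phase argument with the cutoff taken at $\gamma\log n$. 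The one place you genuinely diverge is the sweep-out step: you propose to get macroscopic growth from concentration of the exploration about the ODE \eqref{SkODE}, whereas the paper gets it from the static exploration estimates of \cite{RGD}. Be careful there: \eqref{SkODE} is derived only heuristically in this paper (and checked by simulation), so as stated you would be resting the lower bound on an unproved limit theorem. The rigorous tool already in the paper's toolbox is the Martin-L\"of martingale argument of Section \ref{sec:MLexp} used to prove Theorem \ref{sizeexp}: it works in exploration time rather than real time, handles the dependence among a vertex's edges by conditioning on the infectious period $T_s$ exactly as you do, and carries over to delSIR upon replacing $\tau_s$ by $\frac{\lambda}{\lambda+\rho}\bigl(1-e^{-(\lambda+\rho)T_s}\bigr)$; with that substitution your route closes. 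Finally, a point in your favor: the paper's entire extension of Theorem \ref{ftperc} to exponential times is a two-line remark assigning the marginal probabilities $\lambda/(\lambda+1+\rho)$ and $\rho/(\lambda+1+\rho)$ to the transmission and rewiring indicators, which silently passes over the correlation you flag; your observation that this correlation is confined to edges sharing an infected endpoint and disappears after conditioning on $T$ is exactly what is needed to justify that remark.
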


Again, the proof follows easily from the proof of Theorem \ref{ftperc}.
In the delSIR model, the fraction of the population in a large epidemic is the same as the probability
of a large epidemic and hence has a continuous transition. These two quantities are different in the evoSIR model. 

As the simulations in Figure \ref{fig:exptimerho4} show,
the final size of the evoSIR epidemic is discontinuous at the critical value.
 Here we have plotted the final size for a large number of simulations at each critical value, so there are many points near the $x$ axis that correspond to epidemics that died out.

\begin{figure}[tbp] 
  \centering
  \includegraphics[width=2.98in,height=1.92in,keepaspectratio]{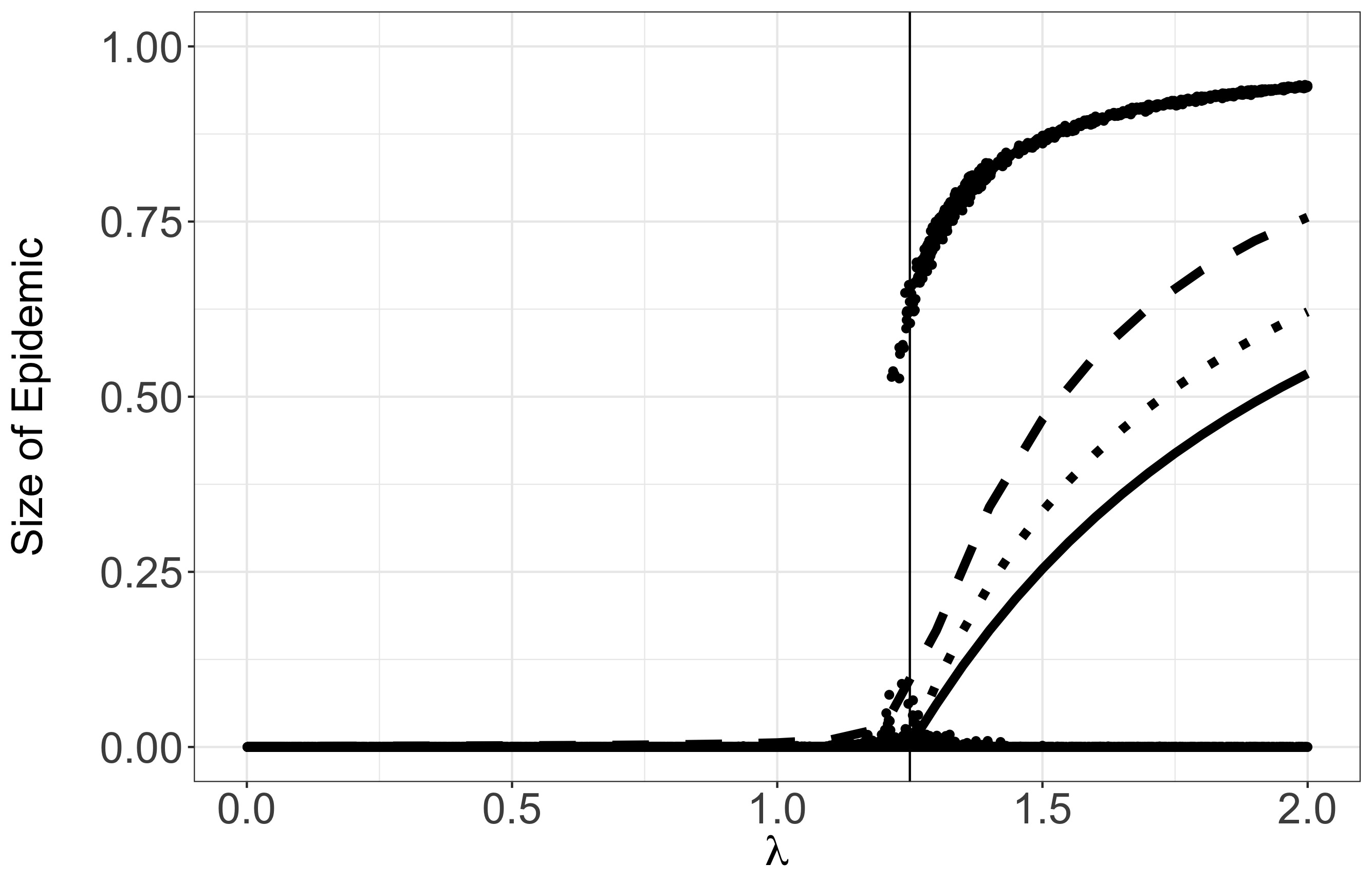}
  \includegraphics[width=2.98in,height=1.92in,keepaspectratio]{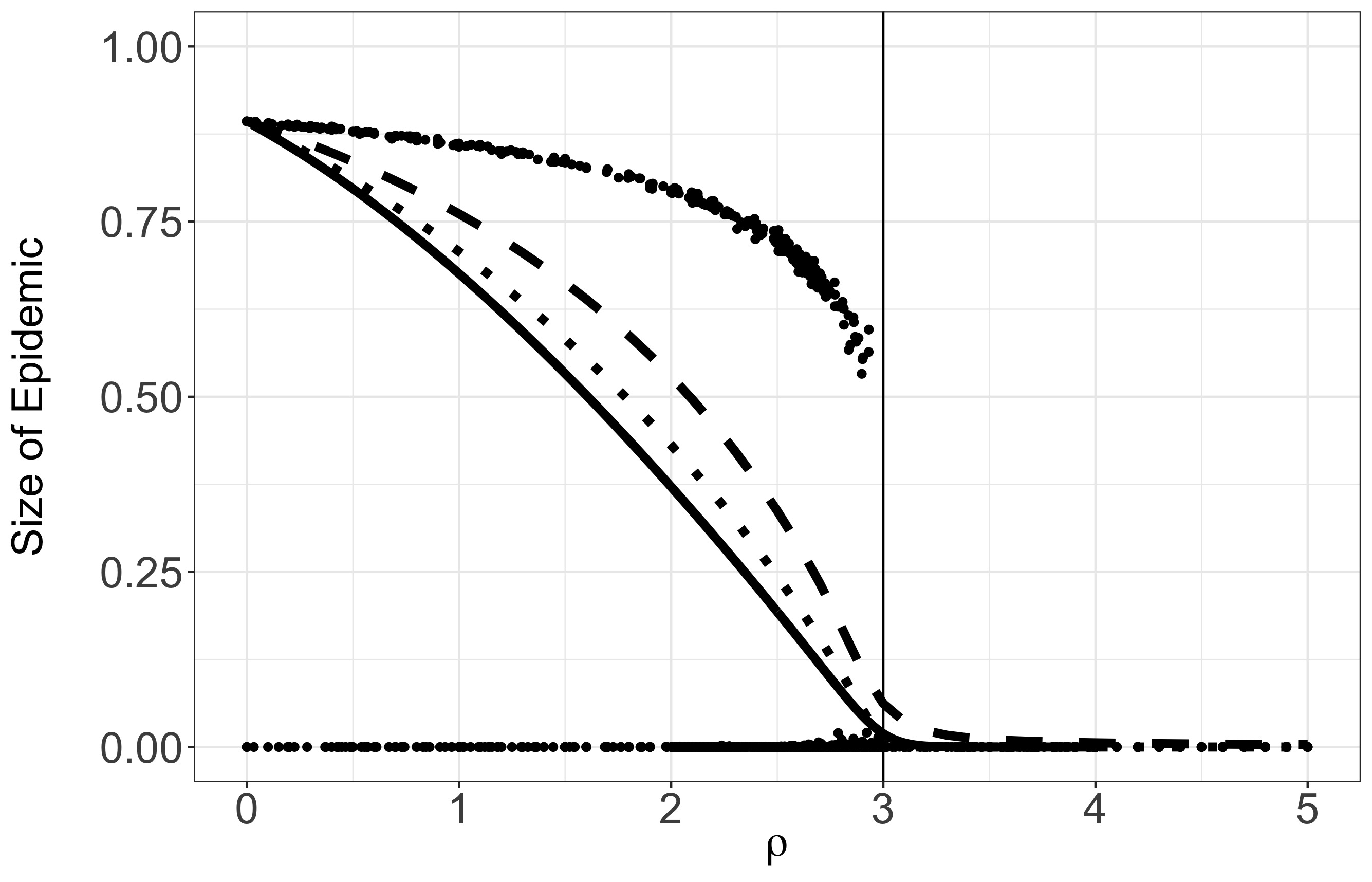}
 \caption{\fns Fraction of individuals infected in an SIR epidemic on an Erd\"os-Renyi graph with $\mu=5$. In the left panel, $\rho=4$ and $\lambda$ varies. In agreement with \eqref{cvSIRr}, $\lambda_c=1.25$. The solid curve is the final size of the delSIR epidemic with the same parameters. The dotted line above it is an approximation that comes from Theorem \ref{finalsize}. The third curve comes from solving (4). In the right panel, $\lambda=1$ with $\rho$ varying. $\rho_c = 3$ in agreement with \eqref{cvSIRr}. The assumptions are similar to Figure 1 of \cite{LBSB}, but our curve is decreasing and their simulation does not show a discontinuous phase transition. If $\lambda$ is larger we do see an increase in density for small $\rho$.}
  \label{fig:exptimerho4}
\end{figure}

\begin{figure}[h] 
  \centering
  \includegraphics[height=2.5in,keepaspectratio]{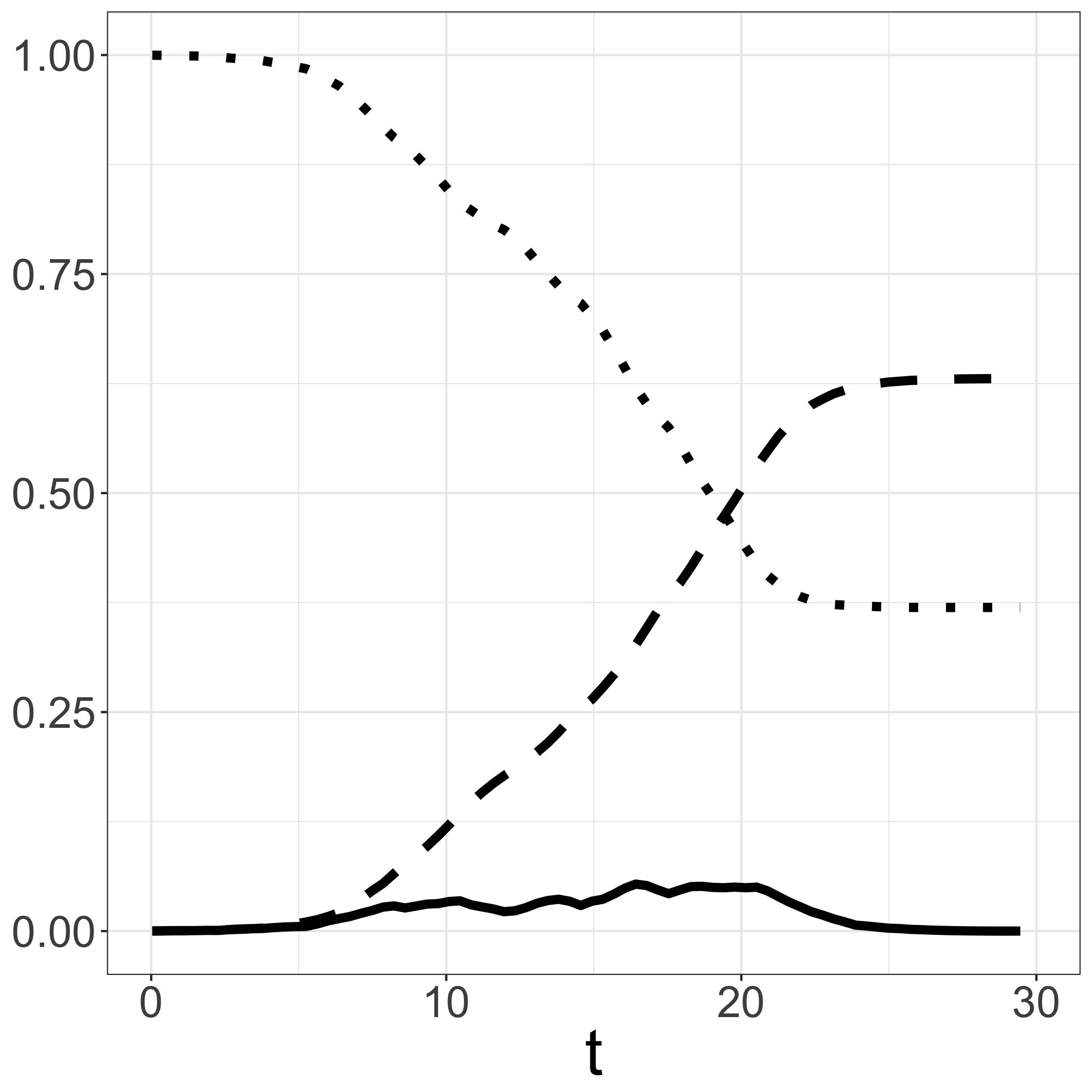}
  \caption{\fns Simulations for the fixed time epidemic with $\mu=5$ and $\rho=4$ at the critical value $\lambda_c = 1.25$. The solid curve
is the number of infecteds $I_t$. At the beginning it has slope 0, but due to rewiring the slope becomes positive, before the curve drops to 
zero due to the susceptible population being depleted. The dashed line is $R_t$, the dotted line $S_t$. }
  \label{fig:critexp}
\end{figure}

\subsection{Attempts at a rigorous analysis} \label{sec:attempt}

Here we explain our efforts to model evoSIR using (i) our extension of Martin-L\"of's ideas and (ii) a modification of the ODE in \eqref{SkODE}. We provide these details in the hope someone can find an accurate approximation that explains the reason for the discontinuous phase transition. 

\subsubsection{Modifying Martin-L\"of's calculation}

The key to Martin-L\"of's proof described in Section 1.1 is to show that $U_{[ns]}/n \to u_s$ which solves
$$
\frac{du_s}{ds} = - \mu\tau u_s.
$$
To take into account the rewiring in the fixed time case, we will let $v_s$ be the average degree of unexplored vertices at time $s$.
Repeating the reasoning in Section \ref{sec:MLfixed} we arrive at differential equations 
\begin{align}
\frac{du_s}{ds}& = - v_s \tau u_s (1-\alpha^f) \label{MLrw}\\
\frac{dv_s}{ds}& = v_s\tau u_s \alpha^f \nonumber
\end{align}
where $\alpha^f = 1 -\tau^f_r/\tau^f$ is the probability a rewiring prevents an infection. See Section 4.3 for more details. The intuition behind the second equation is that edges are being rewired at rate  $v_s \tau u_s \alpha^f$ and are attached to randomly chosen vertices, so the average degree increases at this rate. Note that here time is the number of vertices that have been exposed, which is not the right time scale so there is no guarantee that this computes the right answer.

Dropping the superscript and combining the two equations gives
$$
\frac{d}{ds}[ \alpha u_s + (1-\alpha) v_s ] = 0.
$$
So $\alpha u_s + (1-\alpha) v_s = \alpha + (1-\alpha) \mu$, and we can reduce the system to one equation. Solving them in Section \ref{sec:rewire} gives
\beq
u = \frac{A}{B + (A-B) e^{At}},
\label{solni}
\eeq
where $A = \tau[u(1-\alpha) + \alpha]$ and $B = \tau\alpha$. 

As in Martin-L\"of's result.

\begin{theorem} \label{finalsize}
Our approximation to the final size of a large epidemic with fixed infection times and rewiring is given by the solution $>0$ of $u(t) = 1-t$.
\end{theorem}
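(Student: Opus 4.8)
The plan is to carry Martin-L\"of's bookkeeping (Fact~\ref{MLft} and the paragraph following it) over to the rewiring dynamics, where one now tracks the pair $(u_s,v_s)$ rather than $u_s$ alone, and to read off the final size as the first time the active set empties. Under the rescaling of exploration time by $n$ this stopping condition becomes exactly the equation $u(t)=1-t$.

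First I would recall the exploration algorithm of Section~1.1, adapted so that when a vertex is removed only those unexplored neighbors whose $S$--$I$ edge transmits the infection before any rewiring (probability $\tau^f_r$ per edge) are moved into the active set, while the pre-empted edges are rewired to fresh vertices and thereby inflate the mean degree of the unexplored pool. Exactly one vertex passes from active to removed at each step, so $R_t=t$, i.e.\ $R_{[ns]}/n\to s$. I would then invoke the derivation in Section~\ref{sec:rewire}: reducing the system~\eqref{MLrw} via the conserved quantity $\alpha u_s+(1-\alpha)v_s$ gives a single logistic-type equation with the explicit solution~\eqref{solni}, and along the way one obtains $U_{[ns]}/n\to u_s$ with $u_0=1$, $v_0=\mu$.

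The decisive step is the conservation identity $A_t+R_t+U_t=n$. Dividing by $n$ and passing to the limit along $t=[ns]$ gives the active fraction $a_s=1-s-u_s$. The cluster exploration halts the first time $a_s$ returns to $0$. Since $u_0=1$ we have $a_0=0$, and differentiating~\eqref{MLrw} at $s=0$ (using $\tau^f(1-\alpha^f)=\tau^f_r$) gives $a_0'=\mu\tau^f_r-1$, which is positive precisely in the supercritical regime $\mu\tau^f_r>1$ of Theorem~\ref{ftcrit} where a large epidemic occurs; meanwhile~\eqref{solni} shows $u_s\to 0$, so $a_s\to-\infty$. Hence $a_s$ starts at $0$, rises, and must return to $0$ at some $s^\ast>0$; this first return marks the end of the exploration and is characterized by $u_{s^\ast}=1-s^\ast$. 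At that time the removed, equivalently infected, fraction is $R/n=s^\ast=1-u_{s^\ast}$, the asserted final size.

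The main obstacle is the one the authors already flag: the exploration ``time'' counts exposed vertices rather than real time, so the rate at which rewiring inflates $v_s$ in the second equation of~\eqref{MLrw} is heuristic and need not be calibrated correctly, which is why the statement is phrased as an approximation. Turning $U_{[ns]}/n\to u_s$ into a theorem in the rewiring setting is genuinely delicate: rewiring attaches new edges to randomly chosen (including unexplored) vertices, so the per-step decrements of $U_t$ no longer have a fixed conditional mean and the clean martingale of Fact~\ref{MLft} is lost. For this approximation result I would take that convergence as input from Section~\ref{sec:rewire}; granting it, the identification of the stopping condition $u(t)=1-t$ is elementary.
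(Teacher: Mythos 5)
Your proposal is correct and follows essentially the same route as the paper: the modified Martin-L\"of exploration tracking the pair $(u_s,v_s)$, the conserved quantity $\alpha u_s+(1-\alpha)v_s$ that reduces \eqref{MLrw} to the logistic equation solved explicitly in Section~\ref{sec:rewire}, and the stopping condition $A_t=0$ when $U_t=n-t$ (i.e.\ $u(t)=1-t$) inherited from Section~\ref{sec:MLfixed}. Your explicit check that $a_0'=\mu\tau^f_r-1>0$ in the supercritical regime (so a root $s^\ast>0$ exists) is a small detail the paper leaves implicit, and you correctly flag, as the authors do, that the exploration-time bookkeeping is heuristic, which is exactly why the statement is phrased as an approximation.
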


To generalize to exponential infection times, 
we replace $\tau^f = 1-e^{-\lambda}$ by 
$$
\tau^e = E\tau = E(1-e^{-\lambda T}) = \frac{\lambda}{\lambda+1},
$$
and replace $\alpha^f$ by 
$$
\alpha^e = 1 - \tau_r^e/\tau^e = \frac{\rho}{\rho+1+\lambda}.
$$

\begin{figure}[tbp] 
  \centering
  \includegraphics[bb=53 57 739 555,width=4in,height=2.9in,keepaspectratio]{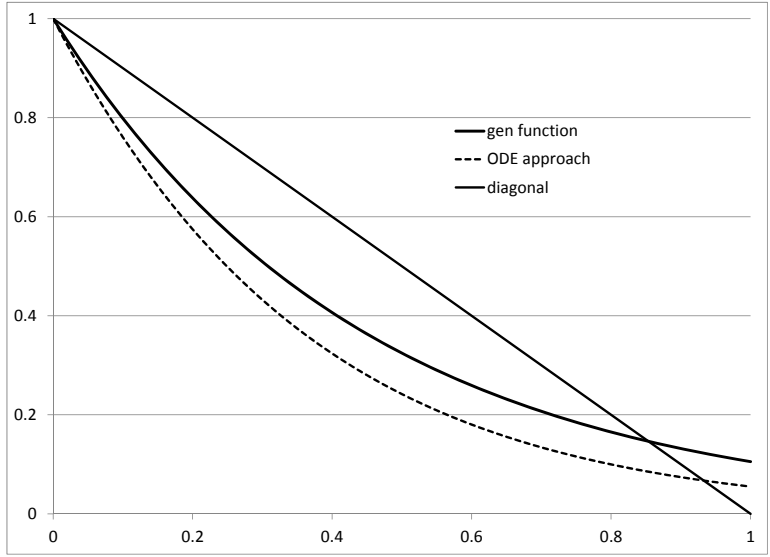}
\caption{\fns Calculation of (1) and (8) for the case $\mu\tau^f_r=3$, $\alpha=1/4$.  } 
  \label{fig:gf_ODE}
\end{figure}

\noindent
Figure \ref{fig:gf_ODE} compares the solution of the new differential equation \eqref{solni} with $\exp(-\mu\tau^f s)$. The place were the solution 
crosses the diagonal gives the approximation in Theorem \ref{finalsize}. This is the quantity plotted as dashed lines in Figures 1 and 4. Note that
in either case the approximation is continuous at $\lambda_c$. To see this is true in general, note that by \eqref{MLrw}
\begin{align*}
\frac{d}{ds} u_s v_s &= - v_s \tau u_s(1-\alpha) v_s + u_s v_s \tau u_s \alpha \\ 
& = u_s v_s [-\tau(1-\alpha) v_s + u_s \alpha].
\end{align*}
When $s=0$, $v_s=\mu$. In the critical case $\mu\tau(1-\alpha)=1$, so the quantity in square brackets is $<0$.
As $s$ increases, $u_s$ decreases and $v_s$ increases, so $u_sv_s$ is decreasing.

\subsubsection{Modifying the ODE}

By analogy with what we did to Martin-L\"of's equation, we should let $\mu_t$ be the average degree of susceptibles at time $t$
and modify the ODE from \eqref{SkODE} to become
\begin{align*}
\frac{dS_k}{dt} &=  - \lambda k S_k + \lambda F \frac{\mu_t}{n} (S_{k-1} - S_k) +  [(k+1) S_{k+1} - k S_k] \\
&\qquad \qquad  + \rho\left(I-\frac{I}{n}\right)  [(k+1) S_{k+1} - k S_k] \\
\frac{d\mu_t}{dt} & = \frac{\rho F}{n} \left(1-\frac{I+R}{n}\right).
\end{align*}
Figure \ref{fig:exprwsim} compares this ODE with the results of simulation. The shapes of the curves are similar but
the magnitudes do not agree. This situation is somewhat puzzling since the ODE seems to accurately model the dynamics.

\begin{figure}[tbp] 
  \centering
  \includegraphics[width=3.07in,height=3.07in,keepaspectratio]{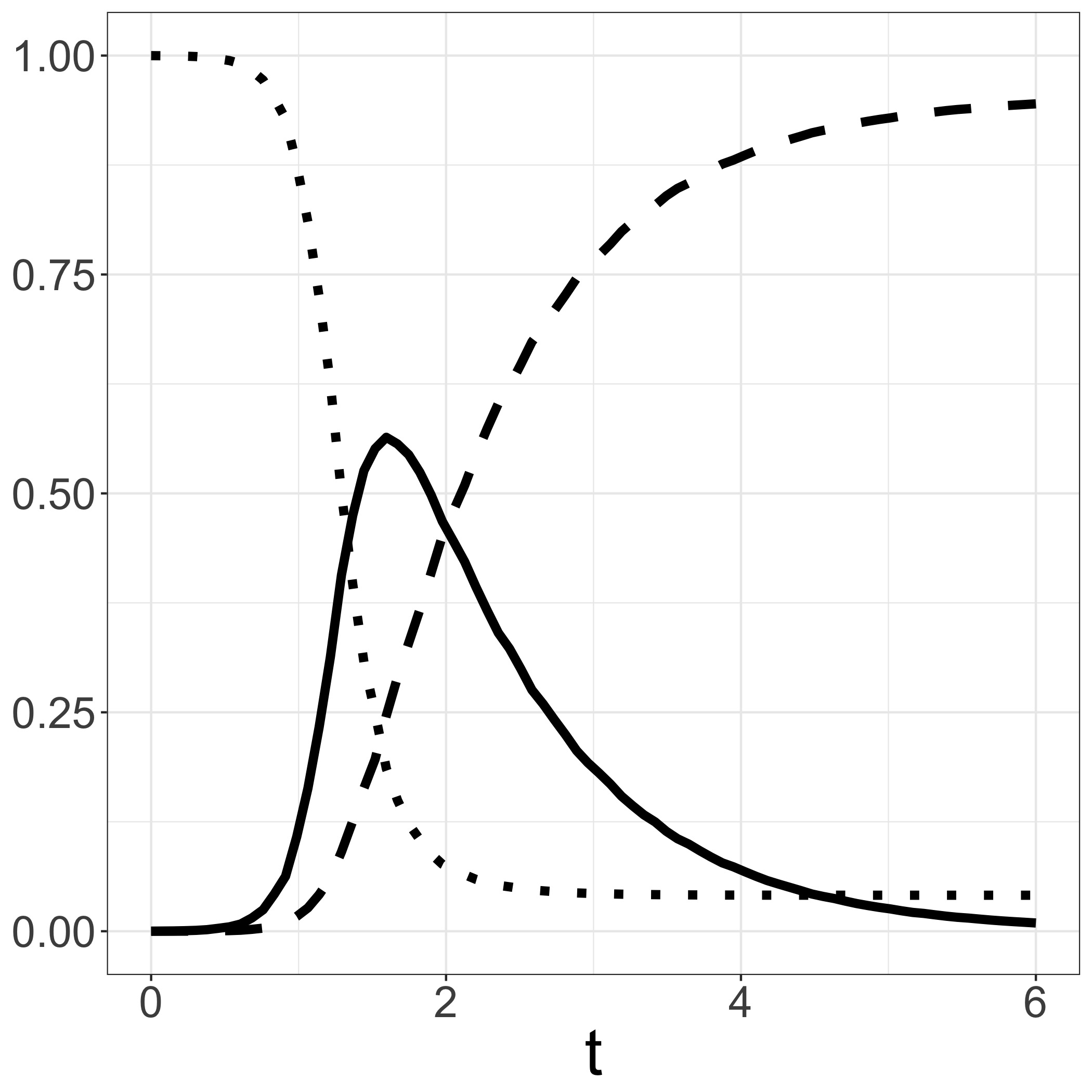}
  \includegraphics[width=3.07in,height=3.07in,keepaspectratio]{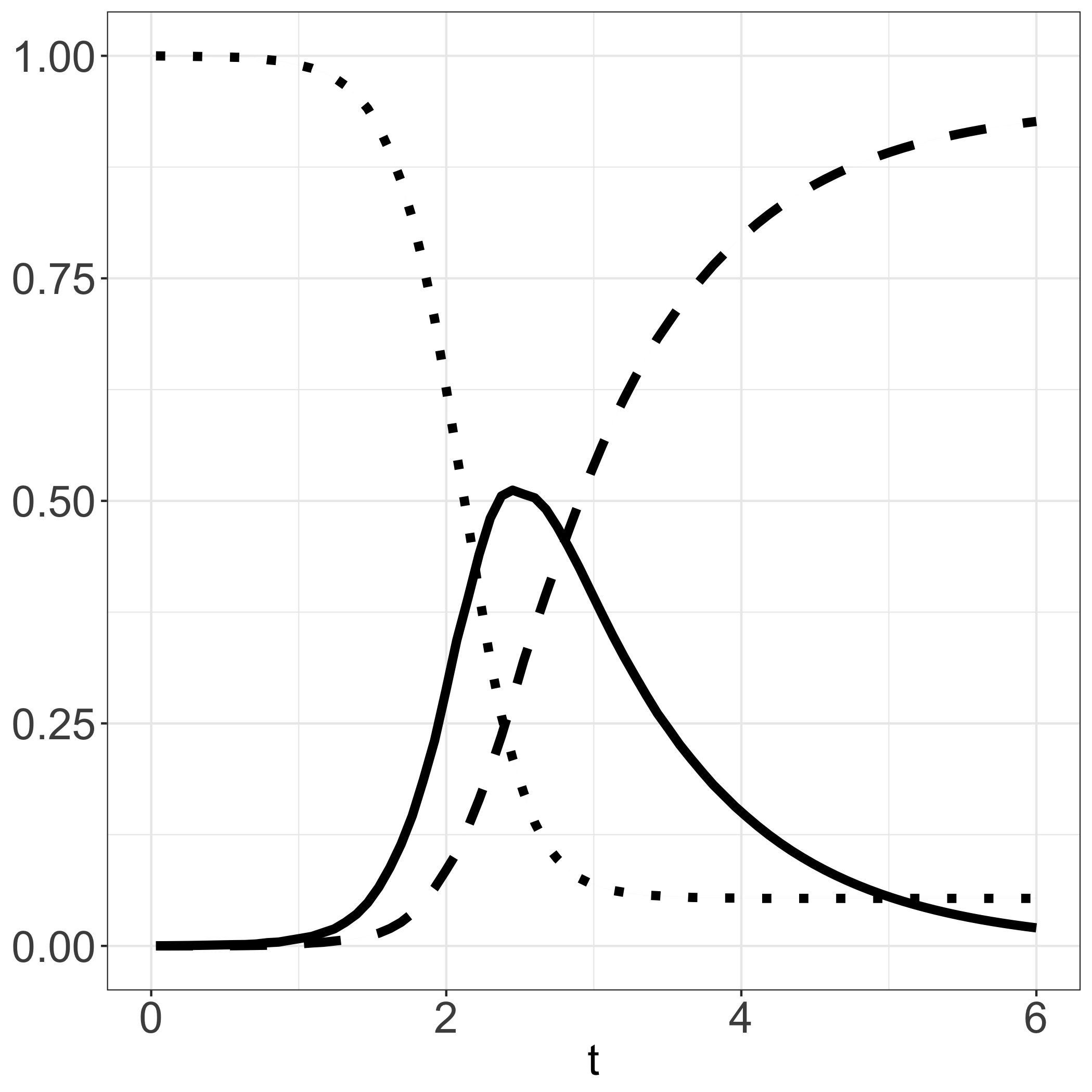}
  \caption{\fns Comparison of the ODE with rewiring (left panel) with the results of simulation (right panel). The curves have similar shapes but the time scales do not agree and the maximum number of infecteds is different. }
  \label{fig:exprwsim}
\end{figure}

\section{Proof of Theorems \ref{ftcrit} and \ref{expcrit}} \label{sec:pfth2}

\mn
To avoid interrupting the flow of the proof, we begin by proving some simple facts that will be useful in the proof. The proof of Theorem \ref{ftcrit} and the extension of the argument that proves Theorem 4 is given at the end of the section.

\begin{lemma} \label{xtok}
If $0 < x < 1$ and $k$ is a positive integer, then $(1-x)^k \ge 1-kx$.
\end{lemma}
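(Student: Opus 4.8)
The inequality is the classical Bernoulli inequality, and the plan is to prove it by induction on $k$. The base case $k=1$ is immediate: $(1-x)^1 = 1-x = 1-1\cdot x$, so the inequality holds (with equality).

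For the inductive step, suppose $(1-x)^k \ge 1-kx$ for some positive integer $k$. Since $0 < x < 1$ we have $1-x > 0$, so multiplying both sides of the inductive hypothesis by $1-x$ preserves the inequality:
$$
(1-x)^{k+1} = (1-x)^k(1-x) \ge (1-kx)(1-x).
$$
Now expand the right-hand side: $(1-kx)(1-x) = 1-(k+1)x + kx^2 \ge 1-(k+1)x$, since $kx^2 \ge 0$. Chaining these gives $(1-x)^{k+1} \ge 1-(k+1)x$, completing the induction.

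There is no real obstacle here; the only point requiring the hypothesis $x<1$ is the sign of $1-x$ in the multiplication step (and $x>0$ is not even needed, though it is the regime of interest). An alternative one-line argument would note that for $0<x<1$ the binomial expansion $(1-x)^k = \sum_{j=0}^k \binom{k}{j}(-x)^j$ can be grouped into nonnegative consecutive pairs after the first two terms $1-kx$, but the induction above is cleaner and avoids bookkeeping about parity of $k$.
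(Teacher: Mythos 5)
Your proof is correct, but it takes a different route from the paper. You give the standard induction proof of Bernoulli's inequality: the base case is equality, and the inductive step multiplies by $1-x>0$ and discards the nonnegative term $kx^2$ in the expansion of $(1-kx)(1-x)$. The paper instead gives a one-line probabilistic argument: take independent events $A_1,\ldots,A_k$ each of probability $1-x$, so that $(1-x)^k = P\left(\cap_{i=1}^k A_i\right) \ge 1 - \sum_{i=1}^k P(A_i^c) = 1-kx$ by the union bound. The probabilistic proof is shorter and fits the paper's style (the same union-bound reflex is used elsewhere), but it genuinely requires $0\le x\le 1$ so that $1-x$ is a probability; your induction, as you note, only needs $1-x\ge 0$ and so proves the slightly more general statement for all $x\le 1$. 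Both are complete and rigorous; the difference is purely one of packaging.
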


\begin{proof}
Let $A_1, \ldots A_k$ be independent and have probability $1-x$.
$$
(1-x)^k = P( \cap_{i=1}^k A_i ) \ge 1 - \sum_{i=1}^k P(A^c_i) = 1-xk
$$
proving the desired result.
\end{proof}

\begin{lemma} \label{remy}
If $Z=\hbox{binomial}(n,p)$ with $0<p<1$ then $P(Z \ge 2) \le P(Z \ge 1)^2$.
\end{lemma}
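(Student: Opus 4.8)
The plan is to reduce the statement to an inequality that follows immediately from Lemma~\ref{xtok}. Since $P(Z\ge 2) = P(Z\ge 1) - P(Z=1)$, the desired bound $P(Z\ge 2)\le P(Z\ge 1)^2$ is equivalent to
$$
P(Z=1) \;\ge\; P(Z\ge 1) - P(Z\ge 1)^2 \;=\; P(Z\ge 1)\,P(Z=0),
$$
so it is enough to prove $P(Z=1) \ge P(Z\ge 1)\,P(Z=0)$.

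Now I would substitute the binomial formulas $P(Z=0)=(1-p)^n$, $P(Z=1)=np(1-p)^{n-1}$, and $P(Z\ge 1)=1-(1-p)^n$. Applying Lemma~\ref{xtok} with $x=p$ and $k=n$ gives $(1-p)^n\ge 1-np$, hence $np\ge 1-(1-p)^n=P(Z\ge 1)$. Combining this with $(1-p)^{n-1}\ge(1-p)^n$, which holds because $0<1-p<1$, yields
$$
P(Z=1) = np\,(1-p)^{n-1} \;\ge\; P(Z\ge 1)\,(1-p)^{n-1} \;\ge\; P(Z\ge 1)\,(1-p)^n \;=\; P(Z\ge 1)\,P(Z=0),
$$
which is exactly the inequality we wanted; rearranging recovers $P(Z\ge 2)\le P(Z\ge 1)^2$.

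There is essentially no obstacle here: once one spots the reduction to $P(Z=1)\ge P(Z\ge 1)P(Z=0)$, the previously proved Lemma~\ref{xtok} does all the work. Alternatively one could argue probabilistically by writing $Z=X_1+\cdots+X_n$ with i.i.d.\ Bernoulli$(p)$ summands: then $P(Z=1)=\sum_i P(X_i=1)\,P(X_j=0\ \forall\, j\ne i)$, and $P(X_j=0\ \forall\, j\ne i)\ge P(Z=0)$, so the matter again comes down to the bound $np\ge P(Z\ge 1)$ supplied by Lemma~\ref{xtok}. I would present the computational version, since it is the shortest and chains directly off the lemma just established.
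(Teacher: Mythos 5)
Your proof is correct, but it takes a genuinely different route from the paper's. You reduce the claim algebraically to $P(Z=1)\ge P(Z\ge 1)P(Z=0)$ and then verify that inequality from the explicit binomial formulas, using Lemma~\ref{xtok} (in the form $np\ge 1-(1-p)^n$) together with the monotonicity $(1-p)^{n-1}\ge(1-p)^n$; every step checks out. The paper instead argues probabilistically: writing $Z=X_1+\cdots+X_n$ with independent Bernoulli$(p)$ summands, it conditions on the location $N_1=m$ of the first success and observes that a second success must occur among the remaining $n-m$ trials, so $P(Z\ge 2\mid N_1=m)=1-(1-p)^{n-m}\le 1-(1-p)^n=P(Z\ge 1)$; averaging over $m$ gives $P(Z\ge 2\mid Z\ge 1)\le P(Z\ge 1)$ and hence the claim. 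The paper's ``restart after the first success'' argument is formula-free, does not invoke Lemma~\ref{xtok}, and extends verbatim to independent but non-identically distributed Bernoulli trials, since it only uses that the probability of no success over a subset of the trials is at least the probability of no success over all of them. Your computation is tied to the i.i.d.\ binomial form, but it is self-contained modulo Lemma~\ref{xtok} and arguably makes the role of that lemma (the union bound $np\ge P(Z\ge 1)$) more transparent; your sketched probabilistic alternative, summing over which index carries the unique success, is closer in spirit to the paper's argument but still routes through $np\ge P(Z\ge 1)$ rather than through the first-success decomposition.
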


\begin{proof}
Let $X_1, \ldots X_n$ be independent with $P(X_i=1)=p$ and $P(X_i=0)=1-p$. Let
\begin{align*}
N_1 & = \inf \{ m \in [1,n] : X_m = 1 \} \\
N_2 & = \inf \{ m \in (N_1,n] : X_m = 1 \}
\end{align*}
where $\inf \emptyset = \infty$. Notice that
\begin{align*}
P(N_1 < \infty) & = P(Z \ge 1) = 1-(1-p)^n \\
P(N_2<\infty \mid N_1 = m) & = 1 - (1-p)^{n-m} \le P(Z \ge 1).
\end{align*}
Since the last result holds for all possible values of $N_1$, we have 
$P(Z\ge 2 \mid Z \ge 1) \le P(Z\le 1)$ and the desired result follows.
\end{proof} 

\begin{lemma} \label{ldev}
Let $\mu>0$ and $Z=\hbox{binomial}(n,\mu/n)$. It holds that 
$$
P \left( Z > \frac{3}{\ln 2} \log n \right) \le \frac{\exp(\mu)}{n^3}.
$$
\end{lemma}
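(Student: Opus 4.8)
The plan is to use a Chernoff-type (exponential Markov) bound, choosing the exponential parameter so that the two logarithms in the statement line up exactly. Since $Z$ is a sum of $n$ independent Bernoulli$(\mu/n)$ variables, I will apply Markov's inequality not to $Z$ but to $2^Z$. For any threshold $t>0$,
$$
P(Z > t) = P\big(2^Z > 2^t\big) \le 2^{-t}\, E\big[2^Z\big].
$$

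Next I would compute the moment $E[2^Z]$. Writing $Z = X_1 + \cdots + X_n$ with the $X_i$ i.i.d.\ Bernoulli$(\mu/n)$, independence gives $E[2^{X_i}] = (1-\mu/n) + 2(\mu/n) = 1 + \mu/n$, so $E[2^Z] = (1+\mu/n)^n$. The elementary inequality $1+x \le e^x$ (already invoked elsewhere in the paper) then yields $E[2^Z] \le e^{\mu}$.

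Finally I would substitute $t = \tfrac{3}{\ln 2}\log n$, where $\log$ is the natural logarithm (consistent with the explicit $\ln 2$ in the statement). Then $2^{-t} = e^{-t\ln 2} = e^{-3\log n} = n^{-3}$, and combining the displays gives $P(Z > t) \le e^{\mu} n^{-3}$, which is exactly the claim. (If one reads $\log$ as $\log_2$ the left side only gets smaller, so the bound still holds.)

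There is essentially no obstacle here: the only ``idea'' is the choice of weights $1$ and $2$ for the coin, i.e.\ taking $s = \ln 2$ in the standard estimate $P(Z>t) \le e^{-st}E[e^{sZ}] \le \exp(\mu(e^s-1) - st)$, which forces $e^s - 1 = 1$ and turns $2^{-t}$ into a clean power of $n$. One could instead optimize over $s$, but since the stated bound is not claimed to be tight, the fixed choice $s=\ln 2$ keeps the whole argument to three lines.
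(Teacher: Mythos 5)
Your argument is correct and is essentially identical to the paper's proof: applying Markov's inequality to $2^Z$ is exactly the paper's choice of $\theta=\log 2$ in the Chernoff bound, and both use $1+x\le e^x$ to bound the moment generating function by $\exp(\mu(e^{\theta}-1))$. No differences worth noting.
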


\begin{proof}
The moment generating function for a binomial is
$$
Ee^{\theta Z} = \sum_{m=0}^n \binom{n}{m} p^m(1-p)^{n-m} e^{\theta m} = (1-p+pe^{\theta})^n.
$$
Using Markov's inequality, then taking $p=\mu/n$ we have
$$
e^{\theta C \log n} P( Z > C\log n ) \le (1 + p(e^{\theta}-1))^n \le \exp(\mu(e^{\theta}-1))
$$
since $1+x \le e^x$.
Taking $\theta = \log 2$ and $C = 3/(\log 2)$ gives the desired result.
\end{proof}

Consider evoSIR on $G= G(n,\mu/n)$ with rewire rate $\rho$, infection rate $\lambda$, and in which infection lasts for a fixed time 1. The probability an infection is transmitted from an infected vertex to a susceptible neighbor is 
$$
\tau_r  = \frac{\lambda}{\lambda + \rho} (1-e^{-(\lambda+\rho)}).
$$
To analyze this process we will consider the deletion model (delSIR) in which edges that are rewired in evoSIR are instead deleted. 
As mentioned earlier, a simple coupling gives the final set of removed individuals in delSIR is contained in the analogous set for evoSIR with the same parameters. In our notation,
$$
\lambda_c(\hbox{evoSIR}) \le \lambda_c(\hbox{delSIR}).
$$ 

To compare the two dynamics we will let 
$$
\tau = 1 - e^{-\lambda}
$$ 
be the probability an infection will be transmitted to a neighbor. We start with $G(n,\mu\tau/n)$. Let 
$$
\alpha = 1 - \tau_r/\tau
$$ 
be the probability that rewiring eliminates a successful infection event. 
In the delSIR dynamic, edges are deleted from $G(n,\mu\tau/n)$ with probability $\alpha$ while in evoSIR they are rewired with probability $\alpha$.
The compare the two evolutions we will first run the delSIR epidemic to completion. Once this is done we will randomly rewire the edges deleted in delSIR. If the rewiring creates a new infection, then we have to continue to run the process and make some further estimates. 

Let ${\cal R}'$ be the set of sites that are removed at time $\infty$ in delSIR, and let ${\cal R}$ be the set of removed sites at time $\infty$ in evoSIR. Let $R' = |{\cal R}'|$ and $R = |{\cal R}|$. 

\begin{lemma} \label{rtail}
If $\tau \mu\alpha  < 1$, then there are constants $C_1$ and $C_2$ so that
\beq
P( R > C_1 \log n ) \le C_2 n^{-3/2}
\label{Rbd}
\eeq
and $E R = E R' +o(1).$ 	
\end{lemma}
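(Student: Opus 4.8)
The plan is to analyze the delSIR epidemic on $G(n,\mu\tau/n)$ together with the extra rewired edges using the exploration algorithm from Section 1.1, and to control the total number of extra infections that the rewiring can trigger. First I would run delSIR to completion. Since $\tau\mu\alpha < 1$, the reduced graph obtained by further deleting each surviving edge of $G(n,\mu\tau/n)$ with probability $\alpha$ is subcritical Erd\H os-Renyi with mean degree $\tau\mu(1-\alpha) = \mu\tau_r < 1$; but that is not quite what we need because the coupling with evoSIR also involves the \emph{deleted} edges. The honest object to bound is the following: the set ${\cal R}'$ of removed sites in delSIR, plus all sites that become infected in evoSIR because one of the $\alpha$-deleted edges is rewired onto a susceptible who is then infected. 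I would therefore set up a two-phase exploration: Phase 1 explores the delSIR component of vertex $1$; Phase 2 goes through the (at most $|{\cal R}'|$, and by standard subcritical estimates at most $C\log n$) deleted edges one at a time, rewires each to a uniformly random vertex, and whenever that vertex is susceptible and the infection crosses, we spawn a fresh delSIR-type exploration from it — which again, by subcriticality, has size $O(\log n)$ whp.

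The key steps in order are: (1) Subcritical component bound for delSIR. Because the delSIR epidemic on $G(n,\mu\tau(1-\alpha)/n)$-type percolation is dominated by a subcritical branching process with mean offspring $\mu\tau_r<1$, standard large deviation estimates (or the martingale/second-moment method sketched for Fact \ref{MLft}) give $P(R' > C_0\log n) \le n^{-4}$ for suitable $C_0$; combined with Lemma \ref{ldev} to control the maximum degree by $(3/\ln 2)\log n$, we get that whp every explored vertex has degree $O(\log n)$, so the total number of deleted edges incident to ${\cal R}'$ is $O((\log n)^2)$. (2) Each rewired edge hits a specific target vertex with probability $1/n$, so the probability that a given rewired edge lands on any already-infected or already-removed vertex — or, more to the point, the probability that it triggers a genuinely new infection — is at most $O((\log n)^2/n)$ per edge times $\tau$; summing over $O((\log n)^2)$ edges, whp \emph{no} rewiring creates a new infection at all, with failure probability $O((\log n)^4/n) = o(n^{-1/2})$. (3) On that event $R = R'$, and $R \le C_1\log n$ follows from step (1); off that event we use the crude deterministic bound $R \le n$ and the $o(n^{-1/2})$ probability, together with $E R' = O(1)$ (again subcriticality), to conclude $ER = ER' + o(1)$. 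Actually, for the expectation I would want a slightly stronger tail than $o(n^{-1/2})$ to kill the $n \cdot P(\text{bad})$ term; one route is to iterate: if a rewiring does fire, the new exploration is still subcritical, so with overwhelming probability the cascade terminates after $O(\log n)$ further rewirings, and a union bound over at most, say, $\log n$ rounds gives failure probability $n^{-3/2}$ as claimed in \eqref{Rbd}, which is more than enough since $ER' = O(1)$ makes $n^{-3/2}\cdot n = n^{-1/2} = o(1)$.

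I would organize the write-up as: first establish $P(R' > C_0 \log n) \le C n^{-3}$ and $ER' < \infty$ bounded uniformly in $n$ (pure subcritical Erd\H os-Renyi facts); then the maximum-degree bound from Lemma \ref{ldev} over the $O(\log n)$ explored vertices via a union bound, giving failure $\le n^{-2}$; then the rewiring-target bound conditional on Phase 1, using that targets are uniform and independent of the $\sigma$-field generated so far, so the number of "successful" rewirings is stochastically dominated by a $\mathrm{Binomial}(O((\log n)^2), O((\log n)^2/n))$ which is $0$ with probability $1 - O((\log n)^4/n)$; finally assemble \eqref{Rbd} and the expectation statement.

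The main obstacle will be step (2)–(3): making the coupling between delSIR and evoSIR precise enough that "rewired edges do nothing" really does imply $R = R'$, and handling the case where a rewiring \emph{does} fire without the bookkeeping exploding. The subtlety is that once you rewire one edge and create a new infection, that new infected vertex has its own edges (including possibly further edges that get rewired), so one must argue the cascade stays subcritical and terminates in $O(\log n)$ steps whp — essentially running the same subcritical-branching argument a second time, but now the "seeds" are the rewiring events rather than vertex $1$. Getting the $n^{-3/2}$ tail (rather than just $o(1/n)$, which would not suffice to absorb $R \le n$ in the expectation) is where I expect the real work to be, and iterating the argument over $O(\log n)$ generations of rewirings — with a union bound at each generation — is the cleanest way I see to get there.
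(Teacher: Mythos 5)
Your proposal follows essentially the same route as the paper: the subcritical tail bound for $R'$, the maximum-degree bound of Lemma \ref{ldev} to cap the number of deleted edges at $O(\log^2 n)$, the observation that each rewired edge hits the dangerous set with probability $O(\log n/n)$, a second subcritical exploration seeded at the rare firing event, and the crude bound $R\le n$ on the residual event when passing to expectations (plus uniform integrability of $R'$). The one correction: the $n^{-3/2}$ in \eqref{Rbd} does not come from a union bound over $\log n$ rounds (that would only give $\tilde O(\log n/n)$, which is worse than a single round); it comes from a product bound --- the paper runs exactly two stages, notes that the probability that \emph{both} stages produce a firing is $\tilde O(n^{-2})=o(n^{-3/2})$, and on the one-firing event writes $R$ as a sum of two independent subcritical cluster sizes, to each of which the stage-one tail bound applies.
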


\begin{proof}
Let $\gamma = \mu\tau - 1 - \log(\mu\tau) > 0$. The proof of \cite[Theorem 2.3.1]{RGD} implies that
\beq
P( R' \ge (3/\gamma) \log n ) \le n^{-3}/\mu\tau\alpha
\label{ldR'}
\eeq
(To see this look at the last displayed equation in the proof and take $\ep=2$, $\lambda=\mu\tau$.)

Let $D = \max_{v \in G} \deg v$. Lemma \ref{ldev} implies
\beq
P \left( D > \frac{3}{\ln 2} \log n \right) \le \frac{\exp(\mu\tau\alpha)}{n^2}
\label{ubD}
\eeq
Let $X'$ be the number of deleted edges in evoSIR. Clearly $X' \le D R'$.
After delSIR has been run to fixation use independent random variables
independent of delSIR to randomly rewire the deleted edges.
Let $Y$ be the number of edges that rewire to ${\cal R}'$.  By construction
$$
Y = \text{binomial}(X',R'/n).
$$
Combining \eqref{ubD} and \eqref{ldR'} we see that there are constant $C_3$ and $C_4$ so that
if $G = \{D ,R' \leq C_3 \log n\}$ then 
$$
P(G) \geq 1 - C_4/n^2
$$ 
On $G$ we have
$$
Y \preceq \text{binomial}(DR',R'/n) \preceq  \text{binomial}(C_3^2 \log^2 n,C_3 \log n/n) \equiv \bar Y
$$
where $\equiv$ indicates that the last equality defines $\bar Y$ and
$Y\preceq Z$ denotes stochastic order: 
$$
P(Y> x) \le P(Z>x)\quad\hbox{for all $x$}.
$$
Using the formula for the binomial distribution  
\beq
P(\bar Y=0) = \left(1- \frac{C_3 \log n}n \right)^{C_3^2 \log^2 n} \ge 1 - \frac{C_3^3 \log^3 n}{n}
\label{Y0bd}
\eeq
by Lemma \ref{xtok}. From this and Lemma \ref{remy} we get 
$$
P(\bar Y\ge 1 )  \leq \frac{C_3^3 \log^3 n}{n} \qquad
P( \bar Y \ge 2 )  \leq \frac{C_3^6 \log^6 n}{n^2}.
$$

Since the second inequality is $o(n^{-3/2})$, we can ignore the possibility that $Y\ge 2$. If $Y=0$ the evoSIR cluster coincides with the delSIR cluster and we are done. It remains to consider the case in which $Y=1$. One endpoint of the rewired edge is in ${\cal R}'$. Let $z_0$ be the one that is not. Note that by construction $z_0$ is chosen at random from $\{1, 2, \ldots n\} - {\cal R}'$. In the second stage of the process we will again run the delSIR dynamics, let ${\cal R}'' \subset \{1, 2, \ldots n\} - {\cal R}'$ be the new members of the removed set, and then flip coins to rewire the edges that have been deleted. Let $Y^*$ be the number of rewired edges that connect to ${\cal R}' \cup {\cal R}''$. The probability that $Y \ge 1$ and $Y^* \ge 1$ is $\le n^{-3/2}$ so only have to look at the case $Y^*=0$, i.e., we only need to look at the delSIR cluster. At this point we are almost ready to claim that the analysis of the second stage follows from that of the first. The last detail is to note that edges rewired in the first stage have increased the density of the graph, so the second stage takes place on an Erd\H os-Renyi graph on $\{1,2,\ldots n\}-{\cal R}'$ in which edges are open with probability $\rho/n$ where $\rho > \mu\tau$. The total number of rewired edges is $\le C \log^2 n$ with high probability, so if $n$ is large $\rho\alpha < 1$, and the reasoning from stage one applies.

To prove that $ER = ER'+o(1)$ we write $E(X;A)$ for the integral of $X$ over $A$ and note that

\begin{itemize}
  \item On $Y=0$ we have $R=R'$.
  \item $P(Y\ge 2) \le (C_3^6 \log^6 n)/n^2$ so we can use the trivial bound $R \le n$ to conclude $E(R;Y \ge 2) \to 0$.
  \item $P(Y=1, Y^*\ge 1)\le C_3^6 \log^2 n)/n^2$, so we can again use the trivial bound $R \le n$.
  \item Finally on $\{ Y=1, Y^*=0 \}$ we let $R_1$ and $R_2$ be the contributions of the two stages. Since $R_2$ is independent of $\{Y_1=1\}$
$$
E((R_1+R_2; Y=1, Y^* = 1) \le E(R'; Y=1) + P(Y=1) ER'
$$
The second term $\to 0$ as $n \to\infty$. To handle the first term note that (2.3.4) in \cite{RGD} shows that $P(R'> k)$ tends to 0 exponentially fast.
This implies that $E(R')^2 \le C$ independent of $n$. Since $P(Y=1)$ an elementary argument shows $E(R';Y=1) \to 0$. (Divide the event into
two pieces depending on whether $R'\le m$ or $R > m$ or quote Theorem 1.6.8 in \cite{PTE4}.) 
\end{itemize}   

Combining the four conclusions proves that $ER = ER' + o(1)$. To prove \eqref{Rbd} note that the events in cases two and three have 
combined probability $\le n^{-3/2}$, and use \eqref{ldR'} for cases one and four. In the fourth case there is no need to condition
on $Y_1=0$. We simply use the fact that in this case we $Y= 1$ so $R \le R'_1 + R'_2$ where $R'_2$ is independent of $R_1'$
and then use \eqref{ldR'} again.
\end{proof}


\begin{proof}[Proofs of Theorems \ref{ftcrit} and \ref{expcrit}]
	Notice that if $\lambda>\lambda_c$ then delSIR has a large epidemic wpp. Since the epidemic size in evoSIR couples to be larger, we also have a large epidemic in evoSIR. If $\lambda< \lambda_c$ then the summable bound in Lemma \ref{rtail} implies that $R \leq C_1 \log n$ for all large enough $n$ whp. Thus, the probability of a large epidemic converges to 0. To generalize to exponential infection times we repeat a similar argument with 
$$
\tau_e = \frac{\lambda}{\lambda+1} \qquad \alpha =  1 - \frac{\tau^e_r}{\tau_e} = \frac{\rho}{\lambda+1+\rho}.
$$
\end{proof}


\section{Proof of Theorem \ref{ftperc} and \ref{exprwcrit} } \label{ssec:pfth3}

Consider delSIR and evoSIR on $G= G(n,\mu/n)$ with deletion (or rewiring) at rate $\rho$, infection rate $\lambda$, and infections last for fixed time 1. Let 
$$
\tau^f_r = \frac{\lambda}{\lambda+\rho} ( 1 - \exp^{-(\lambda+\rho)} )
$$
be the probability that an infection is transmitted from an $I$ at $x$ to a $S$ at $y$ before $x$ becomes healthy or $y$ rewires the connection. By Theorem \ref{ftcrit}, we must have  $\tau \mu >1$ for a large infection to be possible. Start with 1 infected and all other vertices susceptible. Let $B_d = B_d(n)$ be the event that the ending number of infected sites is at least $(1-z_0)n/2$  in delSIR. Here $z_0$ is the fixed point of the generating function giving the epidemic size for delSIR at \eqref{hatG}. Similarly, define $B_e$ for evoSIR.
 In order to prove Theorem \ref{ftperc} we need to show

$$
P(B_e) = P(B_d)+o(1).
$$	

\begin{proof} Clearly $P(B_d) \leq P(B_e)$. To compute the size of the epidemic starting from a single infected, we let the 
the active or infected set $A_0=\{1\}$, the unexplored or susceptible set $U_0 =\{2, \ldots n \}$, and the removed set $R_0=\emptyset$. Let $\eta_{i,j} \eta_{j,i}$ be independent and $=1$ with probability $\mu/n$, $=0$ otherwise. $\eta_{i,j}=1$ if (and only if) there is an edge from $i$ to $j$. For $i \neq j$ let $\zeta_{i,j}=\zeta_{j,i}$ be independent and $=1$ with probability $\tau^f_r$, $=0$ otherwise. If $\zeta_{i,j}=1$ an infection at $i$ is transmitted to $j$. At time $t$ if $A_t \neq\emptyset$ we pick an $i_t \in A_t$ and update the sets as follows. 
\begin{align*}
{\cal R}_{t+1} & =  {\cal R}_t \cup \{ i_t \} \cr
{\cal A}_{t+1} & =  {\cal A}_t - \{ i_t \} \cup \{ y \in {\cal U}_t : \eta_{i_t,y} = \zeta_{i_t,y} = 1 \} \\
{\cal U}_{t+1} & =  {\cal U}_t - \{ y \in {\cal U}_t : \eta_{i_t,y} = \zeta_{i_t,y} = 1 \})
\end{align*}
When ${\cal A}_t=\emptyset$ we have found the cluster containing 1.

Let $A_t= |{\cal A}_t|$ be the number of active sites at time $t$. Consider the time $r= \beta \log n$ defined in the proof of \cite[Theorem 2.3.2]{RGD}. Step 2 of that proof tells us that there are constants $\gamma , C >0$ such that
$$
P(0 < A_r < \gamma \log n) = o(n^{-1}).
$$
Let $F_0 = \{A_r = 0\}, F_1 = \{0 < A_r < \gamma \log n\}$ and $F_2 = \{A_r \geq \gamma \log n\}$. Decomposing $B_d$ into three parts
$$
P(B_d) = \sum_{i=0}^2 P( B_d \mid F_i) P(F_i) = P(B_d \mid F_2)P(F_2) + o(1).
$$
Step 4 of \cite[Theorem 2.3.2]{RGD} implies that $P(B_d \mid F_2) = 1- o(1)$ so
$$
P(B_d) = P(A_r \geq \gamma \log n) + o(1).
$$

Suppose we perform the exploration process for evoSIR using the same $\eta_{i,j}$ and $\zeta_{i,j}$. To account for rewiring, we introduce a sequence of independent random variables $\chi_{i,j} = \chi_{j,i}$ that are $=1$ with probability 
$$
 \frac{\rho}{\lambda+\rho} ( 1 - \exp^{-(\lambda+\rho)} ),
$$
and 0 otherwise. Note that the $\chi_{j,i}$ are never 1 when $\zeta_{i_t,y} = 1$. When $ \eta_{i_t,y} = \chi_{i_t,y} = 1$, $y$ rewires to a vertex chosen at random. 
Let $A'_r$ be the set of active sites at time $r$. Let $D$ be the maximum degree of vertices in $G(n,\mu/n)$.
It follows from Lemma \ref{ldev} that if we pick $C$ large enough then 
$$
P(D > C \log n ) \leq n^{-2}.
$$
Let $S_r$ be the total number of edges have been rewired up to time $r$ in the exploration process. It is trivial that 
$$
S_r \leq D r.
$$
Using the last two results with the fact that $r=\beta \log n$, we have 
$$
P(S_r \geq C \beta \log^2 n) \leq n^{-2}.
$$
Let $R'_t$ is the set of removed sites at time $t$. This grows by 1 at each step the exploration process survives so $A'_t+R'_t$ grows by at most $D$ at each step. 
and it follows that
$$
P(A'_r+R'_r \geq C \beta \log^2 n) \leq n^{-2}.
$$
From this it follows that the probability that no edge counted by $S_r$ is rewired back to a vertex counted by $A'_r + R'_r$ is 
$$
\ge \left( 1- \frac{C \beta \log^2 n }{n} \right)^{ C \beta \log ^2 n} = 1- O(\log^4 n /n).
$$
From this it follows that $P(A'_r = A_r) = 1 - O(\log^2n /n)$ and the desired result follows.
\end{proof}

\mn
{\bf Remark.} To extend the proof to continuous time we set
$$
P( \eta_{i,j}=1) = \frac{\lambda}{\lambda+1+\rho} \text{ and } P( \chi_{i,j}=1) = \frac{\rho}{\lambda+1+\rho}.
$$


\section{A procedure for studying epidemic size}
In this section we expand upon the framework used by Martin-L\"of to study SIR. This leads to a proof of Theorem \ref{sizeexp}, and also more details regarding our attempt to adapt an ODE approach to evoSIR from Section \ref{sec:attempt}. 

\subsection{Fixed time epidemic} \label{sec:MLfixed}

Our first step is to delete edges that the infection will not cross. This results in an Erd\H os-Renyi graph $G(n,\bar\mu/n)$ with $\bar\mu = \mu\tau$.  Recall the construction with $R_t$, $A_t$ and $U_t$ from Section \ref{sec:ODE}, but now when $A_t=\emptyset$, pick an $i_t$ from $U_t$ to continue the construction.
Let $\Delta U_t = U_{t+1} - U_t$. The individual $i_t$ chosen at time $t$ is connected to each individual in $U_t$ with probability $\bar\mu/n$.
The number of new connections is binomial($U_t,\bar\mu/n)$ so
\begin{align*}
E( \Delta U_t | {\cal F}_t ) & =  - U_t \frac{\bar\mu}{n} \cr
\noalign{\smallskip}
\var( \Delta U_t | {\cal F}_t ) & =  U_t \frac{\bar\mu}{n} \left( 1- \frac{\bar\mu}{n} \right)
\end{align*}
Let $X_t = (1-\bar\mu/n)^{-t} U_t$ and observe that
$$
E(X_{t+1}\mid {\cal F}_t) = (1-\bar\mu/n)^{-(t+1)} E(U_{t+1}\mid {\cal F}_t) = (1-\bar\mu/n)^{-(t+1)} (1-\bar\mu/n) U_t = X_t.
$$
So, $X_t$ is a martingale. Multiplying by  a constant or adding a number does not change the martingale property so $Y_t = X_t/n - 1$ is also a martingale. Using the conditional expectation version of $E(X-EX)^2 = EX^2 - (EX)^2$ (Theorem 5.4.7 in \cite{PTE4})
\begin{align*}
E( Y^2_{t+1}-Y^2_t \mid {\cal F}_t) & = E( (Y_{t+1} - Y_t)^2  \mid {\cal F}_t ) \\
& = \frac{1}{n^2} E((X_{t+1} - X_t)^2  \mid {\cal F}_t) \\
& = \frac{1}{n^2} (1-\bar\mu/n)^{-2(t+1)} E((U_{t+1}-(1-\bar\mu/n)U_t)^2 \mid {\cal F}_t) \\
& = \frac{1}{n^2} (1-\bar\mu/n)^{-2(t+1)}  \frac{U_t}{n} \bar\mu ( 1- \bar\mu/n)
\end{align*}

Using $U_t \in [0,n]$
$$
E( Y^2_{t+1}-Y^2_t \mid {\cal F}_t) \le C/n^2
$$
Taking expected value $E( Y^2_{t+1}-Y^2_t) \le C/n^2$ and it follows that $E(Y^2_n) \le C/n$. Using the $L^2$ maximal inequality now
$$
E \left( \max_{0\le t \le n} Y_t^2 \right) \le 4C/n
$$
Using Chebyshev's inequality now and filling in the definitions of $Y_t$ and $X_t$
$$
P \left( \max_{0\le m \le n}| (1-\bar\mu/n)^{-t}U_t/n - 1| > n^{-1/2 +\ep} \right) \le 4Cn^{-2\ep}
$$
and we conclude that
$$
U_{[ns]}/n \to e^{-\bar\mu s} \quad \hbox{uniformly on $[0,1]$}.
$$

$R_t=t$ and $U_t = n - A_t - R_t$, so when $U_t=n-t$ we must have $A_t=0$. This may happen several times at the beginning, but eventually
we will select a member of the giant component. Using this with previous observation we find that the size of the giant component is
the solution of 
$$
e^{-\mu s} = 1 -s
$$
Since the size of the giant component is $1-\rho$ where $\rho$ is the extinction probability, we have a very lengthy derivation of
the fact that $\rho$ is the fixed point of the generating function: $\exp(-\mu(1-\rho))=\rho$. The advantage of the new approach is that
by using formulas for the infinitesimal mean and variance one can show that 
$$
\sqrt{n} \left( \frac{U_{[ns]}}{n} - e^{-\bar\mu s} \right)
$$
has a Gaussian limit, and derive a central limit theorem for the size of the epidemic. For more details see \cite[Section 2.5]{RGD} or \cite{ML}.

\begin{figure}[tbp] 
  \centering
  \includegraphics[bb=53 58 737 555,width=4in,height=2.91in,keepaspectratio]{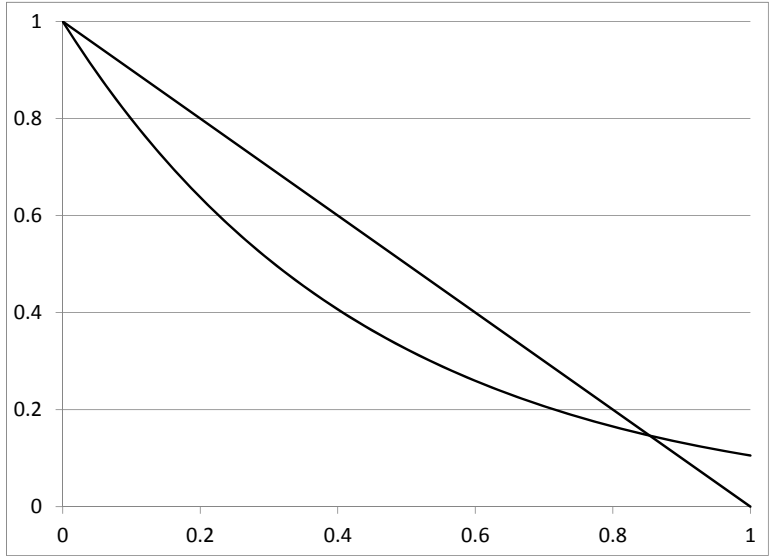}
  \caption{Martin-L\"of argument in discrete time.}
  \label{fig:ML}
\end{figure}

\subsection{Exponential time and proof of Theorem \ref{sizeexp}} \label{sec:MLexp}

When we reveal the neighbors of the $s$th vertex, we assign it an exponentially distributed infection time $T_s$ and 
define the transmissibility by $\tau_s= 1 -e^{\lambda T_s}$. So, if the value of $\tau_s$ is added to the $\sigma$-field ${\cal F}_s$ 
and the result is called ${\cal F}^+_s$, then we have
\begin{align*}
E( \Delta U_s | {\cal F}^+_s ) & =  - U_s \frac{\mu\tau_s}{n} \cr
\noalign{\smallskip}
\var( \Delta U_s | {\cal F}^+_s ) & =  U_s \frac{\mu\tau_s}{n} \left( 1- \frac{\mu\tau_s}{n} \right),
\end{align*}
since given $\tau_s$ the infections are independent. Note that now $\mu$ is the mean degree in the original Erd\H os-Renyi graph.
Rearranging the first equation 
$$
E(U_{s+1}|{\cal F}^+_s) = U_s (1 - \mu\tau_s/n).
$$
Let $\Pi_s = \prod_{r=1}^{s-1} (1- \frac{\mu\tau_r}{n})^{-1}$ and $X_s = U_s\Pi_s$.
To check that $X_s$ is a martingale, note that $\Pi_{s+1}$ is ${\cal F}^+_s$ measurable so
$$
E(X_{s+1}|{\cal F}^+_s) = \Pi_{s+1} E(U_{s+1}|{\cal F}^+_s)  = \Pi_{s+1} U_s \left(1 - \frac{\mu\tau_s}{n}\right) = \Pi_s U_s = X_s
$$
To compute the variance now we note that
\begin{align*}
E((X_{s+1}-X_s)^2|{\cal F}^+_s) & = E((X_{s+1}-X_s)^2|{\cal F}^+_s) \\
& = \Pi_{s+1}^2 E([U_{s+1} - U_s(1-\mu\tau_s/n)]^2| |{\cal F}^+_s) \\
&= \Pi^2_{s+1} \var(\Delta U_s| {\cal F}^+_s) = \Pi^2_{s+1}\frac{ U_s}{n} \mu\tau_s (1-\mu\tau_s/n)
\end{align*}
Since $0 \le \tau_s \le 1$, $\Pi_s \le \exp(\mu s/n)$.
If we let $Y_s = X_s/n - 1$ then
$$
E(Y_{s+1}^2 - Y_s^2|{\cal F}^+_s ) = E((Y_{s+1} - Y_s)^2|{\cal F}^+_s ) \le C/n^2
$$
Using the $L^2$ maximal inequality and Chebyshev's inequality
$$
P \left( \max_{0\le t \le n}| \Pi_t U_t/n - 1| > n^{-1/2 +\ep} \right) \le 4Cn^{-2\ep}
$$
To extract the limit from this we note that
$$
\log \Pi_{ns}  = - \sum_{r=1}^{ns-1} \log (1- \mu\tau_r/n) 
= (1/n) \sum_{r=1}^{ns-1} \mu\tau_r + o(1) \to s \mu E\tau_r
$$
where in the second step we have used $\log(1-x) = - x + O(x^2)$ and in the third we have used the law of large numbers.
From this it follows that 
$$
U_{[ns]}/n \to e^{-s\mu E\tau } \quad \hbox{uniformly on $[0,1]$}.
$$
which proves Theorem \ref{sizeexp}.

\subsection{Fixed time with rewiring} \label{sec:rewire}

To take into account the rewiring in the fixed time case, we will let $v_s$ be the average degree of unexplored vertices at time $s$.
Repeating the reasoning in Section \ref{sec:MLfixed} to differential equations 
\begin{align*}
\frac{du_s}{ds}& = - v_s \tau u_s (1-\alpha) \\
\frac{dv_s}{ds}& = v_s\tau u_s \alpha 
\end{align*}
where $\alpha$ is the probability a rewiring prevents an infection. The same proof works for exponential infection times with $\tau$ replaced by $E\tau$. 

Combining the two equations
$$
\frac{d}{ds}[ \alpha u_s + (1-\alpha) v_s ] = 0
$$
i.e., $\alpha u_s + (1-\alpha) v_s$ is constant and hence equal to its value at time 0, $\alpha + (1-\alpha) \mu$. 
Rearranging 
$$
\alpha u_s + (1-\alpha) v_s = \alpha + (1-\alpha) \mu
$$
we have $v_s = \mu + \alpha(1-u_s)/(1-\alpha)$.
Using this in the first equation
$$
\frac{du_s}{ds} = - u_s\tau \left[\mu + \frac{\alpha(1-u_s)}{1-\alpha} \right].
$$
To solve this write it as
\beq
\frac{du}{ds} = - u ( A- Bu)
\label{logistic}
\eeq
where $A = \tau[\mu(1-\alpha) + \alpha]$ and $B = \tau\alpha$.
Cross-multiplying
$$
-ds = \frac{du}{u(A-Bu)} = \frac{1}{A} \frac{du}{u} + \frac{B}{A}\frac{du}{A-Bu}.
$$
Integrating gives
\begin{align*}
-t + c_0 &= \frac{\log u}{A} - \frac{\log(A-Bu)}{A} \\
-At + c_1 &= \log\left( \frac{u}{A-Bu} \right) \\
 \frac{u}{A-Bu} &= c_2 e^{-At},
\end{align*}
so we have $u e^{At} = c_2(A-Bu)$. Solving gives
$$
u = \frac{Ac_2}{e^{At} + Bc_2}.
$$
We cannot choose $c_2$ to satisfy the initial condition, so we set $c_2=1$ and pick $t_0$ so that 
$e^{At_0} = (B-A)$. This gives
\beq
u = \frac{A}{B + (A-B) e^{At}}.
\label{soln}
\eeq

To check \eqref{soln} we differentiate
\begin{align*}
u'(t) & = -\frac{A(A-B)(Ae^{At})}{(B + (A-B) e^{At})^2}  = - u \frac{(A-B)(Ae^{At})}{B + (A-B) e^{At}} \\
A-Bu & = \frac{A (B + (A-B)) e^{At} - BA}{(B + (A-B) e^{At}}.
\end{align*}


\end{document}